\newcommand{\blue}{\textcolor{black}}
\newtheorem{lemma}{Lemma}[section]
\newtheorem{theorem}[lemma]{Theorem}
\newtheorem{proposition}[lemma]{Proposition}
\newtheorem{conjecture}[lemma]{Conjecture}
\theoremstyle{definition}
\DeclareMathOperator{\cl}{cl}
\begin{document}

\title[Cyclic Arrangements of Circuits and Cocircuits]{Matroids with a Cyclic Arrangement of Circuits and Cocircuits}

\author{Nick Brettell}
\address{School of Mathematics and Statistics, Victoria University of Wellington, New Zealand}
\email{nbrettell@gmail.com}

\author{Deborah Chun}
\address{Mathematics Department, West Virginia University Institute of Technology, 410 Neville Street, Beckley, West Virginia 25801, USA}
\email{deborah.chun@mail.wvu.edu}

\author{Tara Fife}
\address{Department of Mathematics, Louisiana State University, Baton Rouge, Louisiana, USA}
\email{tfife2@lsu.edu}

\author{Charles Semple}
\address{School of Mathematics and Statistics, University of Canterbury, Christchurch, New Zealand}
\email{charles.semple@canterbury.ac.nz}

\keywords{Wheels-and-Whirls Theorem, circuits, cocircuits, flowers, spikes, swirls.}

\subjclass{05B35}

\thanks{The authors thank the Mathematical Research Institute (MATRIX), Creswick, Victoria, Australia, for its support and hospitality during the Tutte Centenary Retreat 26 November--2 December 2017, where work on this paper was initiated. The first and fourth authors were supported by the New Zealand Marsden Fund.}

\date{\today}

\begin{abstract}
  For all positive integers $t$ exceeding one, a matroid has the \emph{cyclic $(t-1, t)$-property} if its ground set has a cyclic ordering $\sigma$ such that every set of $t-1$ consecutive elements in $\sigma$ is contained in a $t$-element circuit and $t$-element cocircuit. We show that if $M$ has the cyclic $(t-1,t)$-property and $|E(M)|$ is sufficiently large, then 
these $t$-element circuits and $t$-element cocircuits are arranged in a prescribed way in $\sigma$,
which, for odd $t$, is analogous to how $3$-element circuits and cocircuits appear in wheels and whirls, and, for even $t$, is analogous to how $4$-element circuits and cocircuits appear in swirls.
Furthermore, we show that any appropriate concatenation~$\Phi$ of $\sigma$ is a flower. If $t$ is odd, then $\Phi$ is a daisy, but if $t$ is even, then, depending on $M$, it is possible for $\Phi$ to be either an anemone or a daisy.
\end{abstract}

\maketitle

\section{Introduction}

Wheels and whirls are matroids with the property that every element is in a $3$-element circuit and a $3$-element cocircuit. As a consequence of this property, no single-element deletion or single-element contraction of a wheel or whirl with rank at least three is $3$-connected, and Tutte's Wheels-and-Whirls Theorem establishes that these are the only $3$-connected matroids for which this holds~\cite{wtt}.

In fact, wheels and whirls have a stronger property concerning $3$-element circuits and $3$-element cocircuits. Let $M$ be a rank-$r$ wheel or rank-$r$ whirl, where $r\ge 2$. Then there is a cyclic ordering $(e_1, e_2, \ldots, e_{2r})$ on the elements of $M$ such that, for all odd $i\in \{1, 2, \ldots, 2r\}$, we have that $\{e_i, e_{i+1}, e_{i+2}\}$ is a $3$-element circuit and $\{e_{i+1}, e_{i+2}, e_{i+3}\}$ is a $3$-element cocircuit, where subscripts are interpreted modulo~$2r$. In particular, $M$ has the property that there is a cyclic ordering of $E(M)$ such that every consecutive pair of elements in this ordering is contained in a $3$-element circuit and a $3$-element cocircuit. In this paper, we investigate generalisations of this property.

Let $t$ be a positive integer exceeding one. A matroid $M$ has the {\em cyclic $(t-1, t)$-property} if there is a cyclic ordering $\sigma$ of $E(M)$ such that every $t-1$ consecutive elements of $\sigma$ is contained in a $t$-element circuit and a $t$-element cocircuit, in which case, $\sigma$ is a {\em cyclic $(t-1, t)$-ordering} of $M$.

Wheels and whirls have the cyclic $(2, 3)$-property. Two classes of matroids that have the cyclic $(3, 4)$-property are the familiar classes of spikes and swirls. For all $r\ge 3$, a {\em rank-$r$ spike} is a matroid $M$ on $2r$ elements whose ground set can be partitioned $(L_1, L_2, \ldots, L_r)$ into pairs such that, for all distinct $i, j\in \{1, 2, \ldots, r\}$, the union $L_i\cup L_j$ is a $4$-element circuit and a $4$-element cocircuit. Therefore, if $\sigma$ is a cyclic ordering of $E(M)$ such that, for all $i$, the two elements in $L_i$ are consecutive in $\sigma$, then $\sigma$ is a cyclic $(3, 4)$-ordering of $M$.
For all $r\ge 3$, a {\em rank-$r$ swirl} is a matroid $M$ on $2r$ elements obtained by taking a simple matroid whose ground set is the disjoint union of a basis $B=\{b_1, b_2, \ldots, b_r\}$ and $2$-element sets $L_1, L_2, \ldots, L_r$ such that $L_i\subseteq \cl(\{b_i, b_{i+1}\})$ for all $i\in [r]$, where subscripts are interpreted modulo $r$, and then deleting $B$. Now let $\sigma=(e_1, f_1, e_2, f_2, \ldots, e_r, f_r)$, where $L_i=\{e_i, f_i\}$ for all $i$. Then $L_i\cup L_{i+1}$ is a $4$-element circuit and a $4$-element cocircuit for all $i$, so $\sigma$ is a cyclic $(3, 4)$-ordering of $M$.

%

If a matroid $M$ has the cyclic $(1, 2)$-property, then it is easily checked that $M$ is obtained by taking direct sums of copies of $U_{1, 2}$. However, if $t\ge 3$, then matroids with the cyclic $(t-1, t)$-property are highly structured. For example, suppose $t=3$, and let $(e_1, e_2, \ldots, e_{2r})$ be a cyclic $(2, 3)$-ordering of the rank-$r$ wheel, where $r\ge 4$. Then, for all $i\in \{1, 2, \ldots, 2r\}$, there is a unique $3$-element circuit and a unique $3$-element cocircuit containing $\{e_i, e_{i+1}\}$. Up to parity, the circuit is $\{e_i, e_{i+1}, e_{i+2}\}$ and the cocircuit is $\{e_{i-1}, e_i, e_{i+1}\}$. The first main result of the paper, Theorem~\ref{main1}, extends this to all positive integers $t$.


\begin{theorem}
\label{main1}
Let $M$ be a matroid and suppose that $\sigma=(e_1, e_2, \ldots, e_n)$ is a cyclic $(t-1, t)$-ordering of $E(M)$, where $n\ge 6t-10$ and $t\ge 3$. Then $n$ is even and, for all $i\in [n]$, there is a unique $t$-element circuit and a unique $t$-element cocircuit containing $\{e_i, e_{i+1}, \ldots, e_{i+t-2}\}$. Moreover,
\begin{enumerate}[{\rm (I)}]
\item If $t$ is odd, then the following hold:
\begin{enumerate}[{\rm(i)}]
\item For all $i\in [n]$, the subset $\{e_i, e_{i+1}, \ldots, e_{i+t-1}\}$ is either a $t$-element circuit or a $t$-element cocircuit, but not both.

\item For all $i\in [n]$, the subset $\{e_i, e_{i+1}, \ldots, e_{i+t-1}\}$ is a $t$-element circuit if and only if $\{e_{i+1}, e_{i+2}, \ldots, e_{i+t}\}$ is a $t$-element cocircuit.

\item For all $j\equiv i \bmod{2}$, if $\{e_i, e_{i+1}, \ldots, e_{i+t-1}\}$ is a $t$-element circuit, then $\{e_j, e_{j+1}, \ldots, e_{j+t-1}\}$ is a $t$-element circuit.
\end{enumerate}

\item If $t$ is even, then the following hold:
\begin{enumerate}[{\rm (i)}]
\item For all $i\in [n]$, exactly one of $\{e_i, e_{i+1}, \ldots, e_{i+t-1}\}$ and $\{e_{i+1}, e_{i+2}, \ldots, e_{i+t}\}$ is a $t$-element circuit.

\item For all $i\in [n]$, the subset $\{e_i, e_{i+1}, \ldots, e_{i+t-1}\}$ is a $t$-element circuit if and only if it is a $t$-element cocircuit.

\item For all $j\equiv i \bmod{2}$, if $\{e_i, e_{i+1}, \ldots, e_{i+t-1}\}$ is a $t$-element circuit, then $\{e_j, e_{j+1}, \ldots, e_{j+t-1}\}$ is a $t$-element circuit.
\end{enumerate}
\end{enumerate}
\end{theorem}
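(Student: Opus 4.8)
The only structural tool I expect to need is orthogonality, namely that a circuit and a cocircuit never meet in exactly one element, together with the self-duality of the hypothesis: since circuits and cocircuits are interchanged by passing to $M^*$, the ordering $\sigma$ is also a cyclic $(t-1,t)$-ordering of $M^*$, so every claim proved for circuits holds verbatim for cocircuits. Throughout I would write $X_i=\{e_i,\ldots,e_{i+t-2}\}$ for the block of $t-1$ consecutive elements beginning at $e_i$, and $A_i=\{e_i,\ldots,e_{i+t-1}\}$ for the block of $t$ consecutive elements beginning at $e_i$; the only two blocks of $t$ consecutive elements containing $X_i$ are $A_{i-1}$ and $A_i$. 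First I would record the easy preliminaries: $M$ has no loops and no coloops (a loop lies in no cocircuit and a coloop in no circuit, contradicting the hypothesis applied to a block containing it), and, because $|X_i|=t-1$, every $t$-element circuit (resp.\ cocircuit) containing $X_i$ has the form $X_i\cup\{x\}$ for a single element $x$.

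The heart of the argument, and the step I expect to be hardest, is a \emph{locality} lemma: the extra element $x$ is always adjacent to the block, i.e.\ $x\in\{e_{i-1},e_{i+t-1}\}$, so that every $t$-element circuit or cocircuit arising from the hypothesis is in fact one of the consecutive blocks $A_{i-1}$ or $A_i$. The mechanism is orthogonality: if $C=X_i\cup\{e_p\}$ is a circuit with $e_p$ far from $X_i$, one wants a cocircuit $C^*=X_k\cup\{f\}$ positioned near $e_p$ so that $X_k$ meets $C$ only in $e_p$ while $f\notin C$, forcing $|C\cap C^*|=1$. The obstacle is that controlling the location of $f$ requires the same locality statement for cocircuits, so the two statements must be proved together; I would run an extremal argument, taking a circuit or cocircuit whose extra element is at maximum displacement from its block and deriving a single-element intersection, the maximality being what bounds the displacement of the separating object. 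The hypothesis $n\ge 6t-10$ is what guarantees that the three relevant blocks (the block $X_i$, a neighbourhood of $e_p$, and the separating block $X_k$) can be placed around the cycle without spurious wrap-around overlaps, which is exactly the room the separation needs.

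Granting locality, uniqueness is short. For a fixed $i$, the only consecutive $t$-blocks containing $X_i$ are $A_{i-1}$ and $A_i$, and at least one is a circuit by hypothesis; if both were circuits then $A_{i-1}\,\triangle\,A_i=\{e_{i-1},e_{i+t-1}\}$ would contain a circuit, hence, as there are no loops, would itself be a parallel pair, which I would rule out by orthogonality against the cocircuit through $X_{i-1}$: by locality that cocircuit is $A_{i-2}$ or $A_{i-1}$, each of which contains $e_{i-1}$ but not $e_{i+t-1}$, giving a single-element meet. Hence exactly one of $A_{i-1},A_i$ is a circuit; by duality exactly one is a cocircuit, which yields the unique $t$-element circuit and the unique $t$-element cocircuit containing $X_i$.

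Finally I would extract the global pattern. Writing $r_i=1$ when $A_i$ is a circuit, the assertion that exactly one of $A_{i-1},A_i$ is a circuit reads $r_{i-1}+r_i=1$ for all $i$, so $r$ alternates; consistency around a cycle of length $n$ forces $n$ to be even, and the circuits are precisely the blocks $A_i$ with $i$ in one fixed parity class, say $i\equiv a$, while the cocircuits are the $A_i$ with $i\equiv b$. This already gives part~(iii) in both cases and the evenness of $n$. To pin down $a$ against $b$ I would use orthogonality between a circuit $A_i$ and a cocircuit $A_j$: since two arcs of length $t$ on a cycle of length $n\ge 2t$ meet in exactly one element precisely when $j\equiv i\pm(t-1)$, such a pair is forbidden, which forces $a-b\equiv t\pmod 2$. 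When $t$ is even this gives $a=b$, so the $t$-blocks that are circuits are exactly those that are cocircuits, yielding (II)(ii) with (II)(i) being the alternation; when $t$ is odd it gives $a\ne b$, so each block $A_i$ is a circuit or a cocircuit but not both and the two types interleave, yielding (I)(i) and (I)(ii).
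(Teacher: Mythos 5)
Your overall architecture --- a locality lemma forcing every $t$-element circuit or cocircuit through $X_i=\{e_i,\ldots,e_{i+t-2}\}$ to be one of the two consecutive blocks $A_{i-1}$, $A_i$, followed by uniqueness, alternation of the circuit indicator around the cycle, and an orthogonality computation aligning the circuit and cocircuit parity classes --- is exactly the paper's, and your endgame (evenness of $n$, the congruence $b\equiv a+t\pmod 2$ giving (I) and (II)) is sound. But the locality lemma, which you rightly identify as the heart of the matter, is not proved, and the mechanism you sketch does not work. Suppose $C=X_i\cup\{e_p\}$ realises the maximum displacement $d\ge 2$ and you choose a separating cocircuit $C^*=X_k\cup\{f\}$ with $e_p\in X_k$ and $X_k\cap X_i=\emptyset$. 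Maximality only confines $f$ to the window of radius $d$ around $X_k$; but $X_i$ itself lies within distance $d$ of $e_p\in X_k$ (that is what displacement $d$ means), so for every admissible $k$ that window reaches back into $X_i$, and $f\in X_i$ --- hence $|C\cap C^*|=2$ --- is not excluded. The paper needs a genuinely two-stage argument here: a chaining step (Lemma~\ref{two}) showing that two ``far'' circuits at positions $i$ and $i+2t-4$ cannot coexist, iterated around the entire cycle (Lemma~\ref{bound}) to obtain the weak bound $C_i\subseteq\sigma_{[i-(2t-4),\,i+3t-6]}$, and then a descending induction on the window size (Lemma~\ref{tight}) to shrink it to $\sigma_{[i-1,\,i+t-1]}$. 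None of this is captured by a one-shot extremal choice.

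Your uniqueness step also contains an error. If $A_{i-1}$ and $A_i$ are both circuits, their symmetric difference $\{e_{i-1},e_{i+t-1}\}$ need not contain a circuit: in $U_{2,4}$ the circuits $\{a,b,c\}$ and $\{a,b,d\}$ have independent symmetric difference $\{c,d\}$. Circuit elimination only yields a circuit inside $(A_{i-1}\cup A_i)-e$ for some $e\in X_i$, which is a $t$-element set, not a parallel pair. The paper's Lemma~\ref{unique} avoids this entirely: assuming $C_i=X_i\cup e_{i+t-1}$ and $D_i=X_i\cup e_{i-1}$ are both circuits, it takes any $t$-element cocircuit $C^*$ containing $X_{i-(t-1)}$; since $X_{i-(t-1)}\cap D_i=\{e_{i-1}\}$, orthogonality with $D_i$ forces the extra element of $C^*$ into $X_i$, whence $|C^*\cap C_i|=1$, a contradiction. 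You should replace the symmetric-difference claim with an argument of that kind; the rest of your derivation of ``exactly one of $A_{i-1},A_i$ is a circuit'' and everything downstream then goes through.
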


Motivated by Theorem~\ref{main1} and, in particular, the way consecutive elements in a cyclic $(t-1, t)$-ordering of a matroid are arranged as $t$-element circuits and $t$-element cocircuits, we next consider the following class of matroids. Let $M$ be a matroid with $n=|E(M)|$ and let $t$ be a positive integer such that $n\ge t+1$. We call $M$ {\em $t$-cyclic} if there exists a cyclic ordering $\sigma=(e_1, e_2, \ldots, e_n)$ of $E(M)$ such that, for all odd $i\in \{1, 2, \ldots, n\}$, either
\begin{enumerate}[(i)]
\item $\{e_i, e_{i+1}, \ldots, e_{i+t-1}\}$ is a $t$-element circuit and $\{e_{i+1}, e_{i+2}, \ldots, e_{i+t}\}$ is a $t$-element cocircuit, or

\item $\{e_i, e_{i+1}, \ldots, e_{i+t-1}\}$ is a $t$-element circuit and $t$-element cocircuit.
\end{enumerate}
If $\sigma$ is such an ordering of $E(M)$, then $\sigma$ is a {\em $t$-cyclic ordering} of $M$, in which case $\sigma$ is {\em odd} if it satisfies (i) and {\em even} if it satisfies (ii).

It is easily seen that wheels and whirls are $3$-cyclic.
The $(3, 4)$-cyclic orderings of spikes and swirls stated earlier are also $4$-cyclic orderings, so spikes and swirls are $4$-cyclic. Moreover, it follows from Theorem~\ref{main1} that if a matroid $M$ has the cyclic $(t-1, t)$-property for some positive integer $t$ exceeding one, then $M$ is $t$-cyclic provided $|E(M)|$ is sufficiently large.

In the second half of the paper, we establish properties of $t$-cyclic matroids. As well as showing basic properties such as the rank and corank of a $t$-cyclic matroid are equal for all $t$, we prove the next two theorems which show that $t$-cyclic matroids naturally give rise to flowers. For the reader unfamiliar with flowers, the notation and terminology relevant to these theorems are given in Section~\ref{prelim}.

The parity of $t$ impacts the structure of a $t$-cyclic matroid.  We first consider the case where $t$ is odd.


\begin{theorem}
\label{oddflower}
Let $t$ be a positive odd integer exceeding one, and let $M$ be a matroid. Suppose that $\sigma$ is an odd t-cyclic ordering of $M$. If $\Phi=(P_1, P_2, \ldots, P_m)$ is a concatenation of $\sigma$ with $|P_i|\ge t-1$ for all $i\in [m]$, then $\Phi$ is a $t$-daisy. Moreover, for all $i\in [m]$, we have $\sqcap(P_i, P_{i+1})=\frac{1}{2}(t-1)$ and, for all non-consecutive petals $P_i$ and $P_j$, we have $\sqcap(P_i, P_j)\le \frac{1}{2}(t-3)$.
\end{theorem}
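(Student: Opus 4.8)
The plan is to reduce everything to computing the ranks of \emph{windows}, by which I mean sets $\{e_a, e_{a+1}, \ldots, e_b\}$ of cyclically consecutive elements. Writing the odd $t$-cyclic ordering so that, for each odd $i$, the set $C_i = \{e_i, \ldots, e_{i+t-1}\}$ is a $t$-circuit and $\{e_{i+1}, \ldots, e_{i+t}\}$ is a $t$-cocircuit, the key lemma I would establish is that, for a window $W$ whose length lies in the relevant range, $r(W) = |W| - c(W)$, where $c(W)$ is the number of circuits $C_i$ contained in $W$. The inequality $r(W) \le |W| - c(W)$ is straightforward: listing the circuits $C_{i_1}, \ldots, C_{i_c}$ contained in $W$ with $i_1 < \cdots < i_c$, the top element $x_k = e_{i_k + t - 1}$ of $C_{i_k}$ exceeds every element of $C_{i_\ell}$ for $\ell < k$, so deleting $x_c, x_{c-1}, \ldots, x_1$ in turn removes, at each step, an element lying in $\cl(C_{i_k} \setminus x_k)$, hence in the closure of the elements not yet removed; thus these $c(W)$ elements are spanned by the rest. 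The reverse inequality, that these forced dependencies are the \emph{only} ones, is where the bulk of the work lies, and I expect it to be the main obstacle: it requires the dual statement (a corank formula for windows coming from the $t$-cocircuits), together with the size hypotheses, to rule out any additional global degeneracy, and it is the step that genuinely uses both the circuit and the cocircuit structure through orthogonality.

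Granting this window-rank lemma, the flower structure follows by direct computation. Each petal $P_i$ is a window and each union $P_i \cup P_{i+1} \cup \cdots \cup P_j$ of consecutive petals is again a window, so $\lambda$ of any such set is obtained by applying the rank formula to the set and to its complementary window. A short calculation, using the equality of rank and corank for $t$-cyclic matroids established earlier, then gives $\lambda(P_i) = t-1$ and likewise for consecutive unions; so each is exactly $t$-separating and $\Phi$ is a $t$-flower by the characterisation in Section~\ref{prelim}.

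Next I would compute the local connectivities. For consecutive petals, $\sqcap(P_i, P_{i+1}) = r(P_i) + r(P_{i+1}) - r(P_i \cup P_{i+1})$; since all three sets are windows, the rank formula collapses this to the number of circuits $C_\ell$ that straddle the boundary between $P_i$ and $P_{i+1}$. These are the circuits with start $\ell$ in an interval of $t-1$ consecutive positions, and, as $t$ is odd, exactly $\tfrac12(t-1)$ of those starts are odd, giving $\sqcap(P_i, P_{i+1}) = \tfrac12(t-1)$ as required. For non-consecutive petals $P_i, P_j$, each petal separating them has size at least $t-1$, so a window of $t$ consecutive elements meeting both $P_i$ and $P_j$ would need to cover a separating petal entirely together with an element on each side, that is, at least $t+1$ elements; hence no single $t$-circuit (nor, dually, $t$-cocircuit) meets both $P_i$ and $P_j$. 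Feeding the rank formula and submodularity into $\sqcap(P_i, P_j)$ then yields $\sqcap(P_i, P_j) \le \tfrac12(t-3)$.

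Finally, since $\tfrac12(t-3) < \tfrac12(t-1)$, not all pairs of petals have equal local connectivity, so $\Phi$ is not an anemone; by the anemone/daisy dichotomy for flowers recorded in Section~\ref{prelim}, $\Phi$ is therefore a $t$-daisy, completing the proof. Beyond the window-rank lemma, the only delicate points are bookkeeping with parities (circuits start at odd positions and cocircuits at even positions) and the cyclic wrap-around, which affect the counting of straddling circuits but not the final values.
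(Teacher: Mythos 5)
Your overall strategy --- compute ranks of cyclic intervals (``windows''), deduce the flower structure and the local connectivities by arithmetic, then invoke the anemone/daisy dichotomy --- is essentially the route the paper takes: your formula $r(W)=|W|-c(W)$ is \cref{petalrank} in disguise, the resulting $\lambda$-computation is \cref{petals1}, and the final appeal to the dichotomy matches the paper's use of \cite[Lemma~4.3]{Aikin2008}. But as written the proposal has two genuine gaps, and they sit exactly where all the work is. First, you prove only $r(W)\le|W|-c(W)$ and explicitly defer the reverse inequality as ``the main obstacle''. That reverse inequality is the heart of the matter: one must show that no dependencies occur inside a window beyond those forced by the $t$-element circuits, and nothing in your write-up does this. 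The paper's argument builds a basis of $W$ one element at a time: whenever the next element $e_{i+j}$ has the parity for which $\{e_{i+j},e_{i+j+1},\ldots,e_{i+j+t-1}\}$ is a cocircuit, that cocircuit minus $e_{i+j}$ is disjoint from the portion of the window already processed, so \cref{handy} shows $e_{i+j}$ is not in the closure of that portion. This is where the hypothesis that every petal (equivalently, the complement of the window) has at least $t-1$ elements is actually used. Without this step you have no lower bound on $r(W)$, hence no equality, and none of the subsequent arithmetic is justified.

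Second, for non-consecutive petals the set $P_i\cup P_j$ is \emph{not} a window, so your rank formula does not apply to it, and the observation that no single $t$-element circuit or cocircuit meets both $P_i$ and $P_j$ does not by itself bound $r(P_i\cup P_j)$ from below: the danger is a long-range dependency between $P_i$ and $P_j$ not witnessed by any one $t$-element circuit. ``Submodularity'' only yields $\sqcap(P_i,P_j)\le\lambda(P_i)=t-1$, which is far too weak. The paper handles this by explicitly exhibiting an independent subset of $P_1\cup P_3$ extending a basis of $P_1$ by $\frac12(|P_3|+1)$, $\frac12(|P_3|+2)$ or $\frac12(|P_3|+3)$ elements according to parity --- again via \cref{handy}, using that the intervening petals $P_2$ and $P_4$ each have at least $t-1$ elements --- which gives the lower bound~\eqref{lower} on $r(P_1\cup P_3)$ and hence $\sqcap(P_1,P_3)\le\frac12(t-3)$. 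You need an argument of this kind; the count of straddling circuits, which does work for consecutive petals, simply does not extend to the non-consecutive case. (The consecutive-petal computation $\sqcap(P_i,P_{i+1})=\frac12(t-1)$ and the deduction of the daisy property from the dichotomy are fine, modulo the two points above.)
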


\noindent Note that if $M$ is a $1$-cyclic matroid with $n$ elements, then it is easily seen that $M$ is the (disjoint) union of $\frac{n}{2}$ loops and $\frac{n}{2}$ coloops. Therefore, any cyclic ordering of $E(M)$, where every two consecutive elements consists of a loop and a coloop, is a $1$-cyclic ordering of $M$. Furthermore, if $\sigma$ is such an ordering and $\Phi$ is a concatenation of $\sigma$ into non-empty sets, then $\Phi$ is a $1$-anemone.

We obtain the following when $t$ is even.

\begin{theorem}
\label{evenflower}
Let $t$ be a positive even integer and let $M$ be a matroid. Let $\sigma=(e_1, e_2, \ldots, e_n)$ be an even $t$-cyclic ordering of $E(M)$, and suppose that $\Phi=(P_1, P_2, \ldots, P_m)$ is a concatenation of $\sigma$ such that, for all $i\in [m]$, if
$$P_i=\{e_{j+1}, e_{j+2}, \ldots, e_{j+k}\},$$
then $|P_i|\ge t-2$, $|P_i|$ is even, and $j+1$ is odd. Then $\Phi$ is a $(t-1)$-flower. Moreover, for all $i\in [n]$, we have $\sqcap(P_i, P_{i+1})=\frac{1}{2}(t-2)$.
\end{theorem}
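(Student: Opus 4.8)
The plan is to reduce everything to a single rank formula for the arcs of $\sigma$, and then to read off the flower structure and the local-connectivity values by direct computation with the connectivity function $\lambda$ and with $\sqcap$. Call a set of consecutive elements of $\sigma$ an \emph{arc}. The key lemma I would prove is that if $S=\{e_{c+1},e_{c+2},\ldots,e_{c+\ell}\}$ is a proper arc with $c+1$ odd, with $\ell=|S|$ even, and with $\ell\ge t-2$, then
\[ r(S)=\tfrac{1}{2}(\ell+t-2). \]
To prove this I would build $S$ up one element at a time and track when the rank increases. Since $\sigma$ is even and $c+1$ is odd, for each odd $i$ the set $\{e_i,\ldots,e_{i+t-1}\}$ is simultaneously a $t$-element circuit and a $t$-element cocircuit, and this single family drives both bounds.

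Each contained circuit $\{e_i,\ldots,e_{i+t-1}\}$ (with $i$ odd) forces $e_{i+t-1}\in\cl(\{e_i,\ldots,e_{i+t-2}\})$, so the rank fails to increase at the even ``closing'' positions $c+t,c+t+2,\ldots,c+\ell$, of which there are exactly $(\ell-t)/2+1$. At every other position the rank does increase: at an early even position the current prefix has fewer than $t$ elements and so is a proper subset of a circuit, hence independent; at an odd position $e_k$ the cocircuit $\{e_k,\ldots,e_{k+t-1}\}$ contains $e_k$ and is disjoint from the prefix $\{e_{c+1},\ldots,e_{k-1}\}$, which certifies $e_k\notin\cl(\{e_{c+1},\ldots,e_{k-1}\})$. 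Summing the increments gives the formula. By cyclic symmetry (shifting the ground set by an even amount preserves both the parity of positions and the circuit/cocircuit structure) the same formula holds for any proper arc of even length at least $t-2$ beginning at an odd position; in particular it applies to each petal, to each union of consecutive petals, and to their complements, since $n$ is even and every petal has even length and begins at an odd position.

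Granting the rank formula, the remainder is bookkeeping. Using that the rank and corank of a $t$-cyclic matroid are equal, so that $r(M)=n/2$, I would compute, for any union $Q$ of consecutive petals with $|Q|=\ell$,
\[ \lambda(Q)=r(Q)+r(E(M)-Q)-r(M)=\tfrac{1}{2}(\ell+t-2)+\tfrac{1}{2}(n-\ell+t-2)-\tfrac{n}{2}=t-2. \]
Hence every petal and every union of consecutive petals is exactly $(t-1)$-separating, which (with $m\ge 3$) is exactly the assertion that $\Phi$ is a $(t-1)$-flower. For the local connectivity, the sets $P_i$, $P_{i+1}$, and $P_i\cup P_{i+1}$ are all arcs to which the rank formula applies, so
\[ \sqcap(P_i,P_{i+1})=r(P_i)+r(P_{i+1})-r(P_i\cup P_{i+1})=\tfrac{1}{2}(|P_i|+t-2)+\tfrac{1}{2}(|P_{i+1}|+t-2)-\tfrac{1}{2}(|P_i|+|P_{i+1}|+t-2)=\tfrac{1}{2}(t-2), \]
as required.

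The main obstacle is the rank formula, and within it the verification that the rank genuinely increases at every non-closing position. The increments at odd positions depend on exhibiting a cocircuit that isolates the new element from the current prefix, and near the far end of a long arc the natural cocircuit $\{e_k,\ldots,e_{k+t-1}\}$ can wrap around and re-enter the prefix; dealing with these boundary positions (either by selecting a different cocircuit through $e_k$ that avoids the prefix, or by running the dual argument on the complementary arc) is the delicate part. Everything else is a direct computation that runs parallel to the odd case handled in \Cref{oddflower}.
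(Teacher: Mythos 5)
Your proposal is correct, and it reaches the flower property by a genuinely more economical route than the paper. Your key rank formula $r(S)=\frac{1}{2}(\ell+t-2)$ is exactly the paper's Lemma~\ref{petalrank2}, proved the same way (circuits force the rank to stall at the even positions from $c+t$ onwards; cocircuits through $e_k$ disjoint from the prefix certify, via Lemma~\ref{handy}, that the rank grows at every odd position). Where you diverge is in deducing the flower structure: the paper proves a separate inductive connectivity lemma (Lemma~\ref{petals2}) computing $\lambda$ of \emph{every} arc of length at most $n/2$, with four parity cases, whereas you apply the rank formula to a union $Q$ of consecutive petals \emph{and} to its complement (itself an arc of even length starting at an odd position, containing at least one petal) and use $r(M)=n/2$ from Lemma~\ref{rank} to get $\lambda(Q)=t-2$ directly. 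This eliminates Lemma~\ref{petals2} entirely at the cost of invoking Lemma~\ref{rank}, and is arguably cleaner. One point to tighten: your key lemma needs the hypothesis $|E(M)-S|\ge t-2$, not merely that $S$ is a proper arc; this is precisely what guarantees that the cocircuit $\{e_k,\ldots,e_{k+t-1}\}$ at the last odd position $k=c+\ell-1$ does not wrap around into the prefix (one checks $k+t-1\le c+n$ if and only if $n-\ell\ge t-2$), so the ``delicate'' boundary issue you flag disappears once the hypothesis is stated. Since the complement of any proper union of petals contains a petal of size at least $t-2$, the hypothesis holds in every application, so no further work is needed.
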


\noindent In reference to Theorem~\ref{evenflower}, observe that we have not specified whether $\Phi$ is a $(t-1)$-anemone or a $(t-1)$-daisy. If $t=2$, then $\Phi$ is a $1$-anemone.  However, for all even $t\ge 4$, there exist $t$-cyclic matroids giving rise to $(t-1)$-anemones and $t$-cyclic matroids giving rise to $(t-1)$-daisies.  
This follows from a construction that obtains, for all $t\ge 2$, a $(t+2)$-cyclic matroid from a $t$-cyclic matroid.
Indeed, we conjecture that for all even $t\ge 4$, every $t$-cyclic matroid can be constructed from a $4$-cyclic matroid that is either a spike or a swirl by a generalisation of this construction. A more precise statement of this conjecture is given at the end of the paper.

Matroids with the property that every $t$-element subset of the ground set is contained in both an $\ell$-element circuit and an $\ell$-element cocircuit have recently been studied~\cite{tspikes}, continuing similar investigations in \cite{miller2014,pfeil}.
In particular, sufficiently large matroids with the property that every $t$-element set is contained in a $2t$-element circuit and $2t$-element cocircuit have a partition into pairs such that the union of any $t$ pairs is a circuit and a cocircuit.
For such matroids, 
there is an obvious cyclic ordering of the ground set that demonstrates these are $2t$-cyclic matroids.

The paper is organised as follows. The next section consists of some preliminaries, while Section~\ref{proof1} consists of the proof of Theorem~\ref{main1}. Basic properties of $t$-cyclic matroids are established in Section~\ref{tcyclic}, and the proofs of Theorems~\ref{oddflower} and~\ref{evenflower} are given in Section~\ref{flowers}. Lastly, in Section~\ref{build}, we detail, for all $t\ge 2$, a construction that produces a $(t+2)$-cyclic matroid from a $t$-cyclic matroid. We will use this construction to show that, for all even $t\ge 4$, 
there are $t$-cyclic matroids that give rise to $(t-1)$-anemones, and $t$-cyclic matroids that give rise to $(t-1)$-daisies.

\section{Preliminaries}
\label{prelim}

Notation and terminology follows Oxley~\cite{oxbook}, and the phrase ``by orthogonality'' refers to the fact that a circuit and cocircuit of a matroid cannot intersect in exactly one element. We use $[n]$ to denote the set $\{1, 2, \ldots, n\}$. 
When $i \le j$, we use $[i,j]$ to denote the set $\{i,i+1,i+2,\dotsc,j\}$; whereas when $i > j$, we use $[i,j]$ to denote $[i,n] \cup [1,j]$.
If $\sigma=(e_1, e_2, \ldots, e_n)$ is a cyclic ordering of $\{e_i: i\in [n]\}$, then all subscripts are interpreted modulo $n$. Furthermore, we say that $(P_1, P_2, \ldots, P_m)$ is a {\em concatenation of $\sigma$} if there are indices
$$1\le k_1 < k_2 < \cdots < k_m\le n$$
such that $P_i=\big\{e_j: j\in [k_{i-1}, k_i-1]\big\}$ for all $i\in [m]$.
The following well-known lemma is used throughout the paper.


\begin{lemma}
\label{handy}
Let $e$ be an element of a matroid $M$, and let $X$ and $Y$ be disjoint sets that partition $E(M)-e$. Then $e\in \cl(X)$ if and only if $e\not\in \cl^*(Y)$.
\end{lemma}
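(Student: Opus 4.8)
The plan is to verify the equivalence directly from the dual rank function. Recall that for any $A\subseteq E(M)$ the rank in the dual is $r^*(A)=|A|+r(E(M)-A)-r(E(M))$, and that closure membership is detected by rank: $e\in \cl(X)$ if and only if $r(X\cup e)=r(X)$, while $e\in \cl^*(Y)$ if and only if $r^*(Y\cup e)=r^*(Y)$. So it suffices to translate the right-hand condition into a statement about $r$ and compare.

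First I would use the hypothesis that $X$, $Y$, and $\{e\}$ partition $E(M)$ to rewrite the relevant complements as $E(M)-Y=X\cup e$ and $E(M)-(Y\cup e)=X$. Substituting into the dual rank formula gives $r^*(Y)=|Y|+r(X\cup e)-r(E(M))$ and $r^*(Y\cup e)=(|Y|+1)+r(X)-r(E(M))$. Then $e\in \cl^*(Y)$ holds exactly when $r^*(Y\cup e)=r^*(Y)$, and after cancelling the common terms $|Y|$ and $r(E(M))$ this reduces to $1+r(X)=r(X\cup e)$, that is, $r(X\cup e)=r(X)+1$. Since rank increases by at most one when a single element is added, this last equation says precisely that $e\notin \cl(X)$. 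Reading the chain of equivalences in reverse yields $e\in \cl(X)$ if and only if $e\notin \cl^*(Y)$, as required.

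This argument is a routine manipulation of the rank function, so I do not anticipate a genuine obstacle; the only point needing care is the bookkeeping of complements under the partition, ensuring that $X$ appears when $Y\cup e$ is removed and $X\cup e$ appears when only $Y$ is removed. A more combinatorial alternative would phrase $e\in \cl(X)$ as the existence of a circuit through $e$ contained in $X\cup e$ and $e\in \cl^*(Y)$ as the existence of a cocircuit through $e$ contained in $Y\cup e$, and then invoke orthogonality to rule out both holding or both failing simultaneously; but the rank computation is shorter and entirely self-contained, and is the route I would take.
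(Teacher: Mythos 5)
Your proof is correct and complete: the identity $r^*(A)=|A|+r(E(M)-A)-r(M)$, together with the observation that the partition forces $E(M)-Y=X\cup e$ and $E(M)-(Y\cup e)=X$, reduces the claim to the dichotomy $r(X\cup e)\in\{r(X),r(X)+1\}$, exactly as you carry out. The paper states this lemma as well known and supplies no proof at all, so there is nothing to compare against; your dual-rank computation is the standard derivation, and the orthogonality-based alternative you sketch at the end is the other common route.
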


\noindent {\bf Connectivity.} Let $M$ be a matroid with ground set $E$. The {\em connectivity function $\lambda$} of $M$ is defined, for all subsets $X$ of $E$, by
$$\lambda(X)=r(X)+r(E-X)-r(M).$$
Equivalently, for all subsets $X$ of $E$, we have $\lambda(X)=r(X)+r^*(X)-|X|$. A set $X$ or a partition $(X, E-X)$ is {\em $k$-separating} if $\lambda(X) < k$. Additionally, if $\lambda(X)=k-1$, then the $k$-separating set $X$ or $k$-separating partition $(X, E-X)$ is {\em exact}.

For all subsets $X$ and $Y$ of $E$, the {\em local connectivity} between $X$ and $Y$, denoted $\sqcap(X, Y)$, is defined by
$$\sqcap(X, Y)=r(X)+r(Y)-r(X\cup Y).$$
Note that $\sqcap(X, Y)=\sqcap(Y, X)$. Also, if $(X, Y)$ is a partition of $E$, then $\sqcap(X, Y)=\lambda(X)$.

\noindent {\bf Flowers.} Flowers naturally describe crossing separations in a matroid. Originally defined for $3$-separations in $3$-connected matroids~\cite{Oxley2004}, flowers were later generalised in order to describe crossing $k$-separations in a matroid, without any connectedness condition~\cite{Aikin2008}.

For a matroid $M$ and an integer $m>1$, a partition $\Phi=(P_1, P_2, \ldots, P_m)$ of $E(M)$ into non-empty sets is a {\em $k$-flower} with {\em petals} $P_1, P_2, \ldots, P_m$ if each $P_i$ is exactly $k$-separating and, when $m\ge 3$, each $P_i\cup P_{i+1}$ is exactly $k$-separating, where all subscripts are interpreted modulo $m$. It is also convenient to view $(E(M))$ as  $k$-flower with a single petal. Suppose $\Phi=(P_1, P_2, \ldots, P_m)$ is a $k$-flower of $M$. Then $\Phi$ is a {\em $k$-anemone} if $\bigcup_{i\in I} P_i$ is exactly $k$-separating for all proper subsets $I$ of $[m]$. Furthermore, $\Phi$ is a {\em $k$-daisy} if $\bigcup_{i\in I} P_i$ is exactly $k$-separating for precisely the proper subsets $I$ of $[m]$ whose members form a consecutive set in the cyclic order $(1, 2, \ldots, m)$. Aikin and Oxley~\cite[Theorem~1.1]{Aikin2008} showed that every $k$-flower of $M$ is either a $k$-daisy or a $k$-anemone.

Suppose that $\Phi=(P_1, P_2, \ldots, P_m)$ is a $k$-flower of a matroid $M$, where $m \ge 4$ and $\sqcap(P_i,P_{i+1}) = c$ for all $i \in [m]$.
To show that $M$ is a $k$-daisy, it suffices, by \cite[Lemma~4.3]{Aikin2008}, to show that $\sqcap(P_i, P_j) \neq c$ for some distinct $i,j \in [m]$.

\section{Proof of Theorem~\ref{main1}}
\label{proof1}

Throughout this section, let $M$ be a matroid and let $\sigma=(e_1, e_2, \ldots, e_n)$ be a cyclic $(t-1, t)$-ordering of $E(M)$, where $t\ge 3$. Furthermore, for all distinct $i, j\in [n]$, let $\sigma_{[i,\, j]}$ denote the set of elements $\{e_i, e_{i+1}, \ldots, e_j\}$ and let \blue{$X_i=\sigma_{[i,\, i+t-2]}$}. If $C_i$ (resp.\ $C^*_i$) is a $t$-element circuit (resp.\ cocircuit) of $M$ containing $X_i$, we denote the unique element in $C_i-X_i$ (resp.\ $C^*_i-X_i$) by $c_i$ (resp.\ $c^*_i$). The proof of Theorem~\ref{main1} is essentially partitioned into a sequence of lemmas.

\begin{lemma}
\label{two}
Let $n\ge 4t-6$. Then
\begin{enumerate}[{\rm (i)}]
\item \blue{there is no $i\in [n]$ such that} $C_i$ and $C_{i+2t-4}$ are $t$-element circuits containing $X_i$ and $X_{i+2t-4}$, respectively, with $C_i\not\subseteq \sigma_{[i,\, i+3t-6]}$ and $C_{i+2t-4}\not\subseteq \sigma_{[i,\, i+3t-6]}$, and

\item \blue{there is no $i\in [n]$ such that} $C^*_i$ and $C^*_{i+2t-4}$ are $t$-element cocircuits containing $X_i$ and $X_{i+2t-4}$, respectively, with $C^*_i\not\subseteq \sigma_{[i,\, i+3t-6]}$ and $C^*_{i+2t-4}\not\subseteq \sigma_{[i,\, i+3t-6]}$.
\end{enumerate}
\end{lemma}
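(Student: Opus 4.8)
The plan is to argue by contradiction, and to prove only part~(i): since interchanging circuits and cocircuits turns a cyclic $(t-1,t)$-ordering of $M$ into one of $M^*$, statement~(ii) is just~(i) applied to $M^*$. So suppose some $i$ violates~(i); after a cyclic shift of $\sigma$ I may assume $i=1$. Thus $C_1=X_1\cup\{c_1\}$ and $C_{2t-3}=X_{2t-3}\cup\{c_{2t-3}\}$ are $t$-element circuits with $c_1,c_{2t-3}\notin A$, where $A=\sigma_{[1,\,3t-5]}$; write $B=E(M)-A=\{e_{3t-4},\dots,e_n\}$, so that $c_1,c_{2t-3}\in B$. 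I will use repeatedly that $X_1$, $X_{2t-3}$ and the middle window $X_{t-1}=\{e_{t-1},\dots,e_{2t-3}\}$ all lie in $A$, that $X_{t-1}\cap X_1=\{e_{t-1}\}$ and $X_{t-1}\cap X_{2t-3}=\{e_{2t-3}\}$, and that $X_1\cap X_{2t-3}=\emptyset$.

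First I would show that $c_1=c_{2t-3}$. Take a $t$-element cocircuit $C^*_{t-1}=X_{t-1}\cup\{c^*_{t-1}\}$ containing the middle window. In $C_1\cap C^*_{t-1}$ the two windows contribute only $e_{t-1}$, and $c_1\in B$ lies in neither $X_{t-1}$ nor $X_1$; so by orthogonality this intersection cannot be a single element, which forces $c^*_{t-1}\in X_1-\{e_{t-1}\}$ or $c^*_{t-1}=c_1$. The same computation for $C_{2t-3}\cap C^*_{t-1}$ forces $c^*_{t-1}\in X_{2t-3}-\{e_{2t-3}\}$ or $c^*_{t-1}=c_{2t-3}$. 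The four sets $X_1-\{e_{t-1}\}$, $X_{2t-3}-\{e_{2t-3}\}$, $\{c_1\}$ and $\{c_{2t-3}\}$ are pairwise disjoint except possibly for $\{c_1\}=\{c_{2t-3}\}$, so the single element $c^*_{t-1}$ can satisfy both requirements only if $c^*_{t-1}=c_1=c_{2t-3}$. Denote this common element by $c$.

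The decisive step then exploits $n\ge 4t-6$. Since $|B|=n-(3t-5)\ge t-1$ and $B$ is a block of consecutive elements of $\sigma$, there is a window $X_j$ of $t-1$ consecutive elements with $c\in X_j\subseteq B$. Choosing a $t$-element cocircuit $C^*_j=X_j\cup\{c^*_j\}$, the set $X_j$ is disjoint from both $X_1$ and $X_{2t-3}$, while $c\in X_j\subseteq C^*_j$. Hence $C_1\cap C^*_j=\{c\}\cup(\{c^*_j\}\cap X_1)$, and orthogonality forces $c^*_j\in X_1$. But then $C_{2t-3}\cap C^*_j=\{c\}$, a circuit meeting a cocircuit in exactly one element --- the required contradiction.

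I expect the main obstacle to be the first step: pinning down that the orthogonality conditions coming from the single middle cocircuit $C^*_{t-1}$ really do force the equality $c_1=c_{2t-3}$, rather than merely restricting where the various ``extra'' elements sit; this rests on the observation that $X_{t-1}$ is the unique window meeting each of $X_1$ and $X_{2t-3}$ in exactly one element. Once the shared element $c$ is identified, the finish is clean, and the hypothesis $n\ge 4t-6$ is used in exactly one place, namely to guarantee that a full $(t-1)$-element window fits inside the complement $B$ and so can be used to catch $c$.
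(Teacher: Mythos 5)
Your proof is correct and follows essentially the same route as the paper's: orthogonality against the middle cocircuit $C^*_{i+t-2}$ forces $c_i=c^*_{i+t-2}=c_{i+2t-4}$, and then a $(t-1)$-element window inside the complement of $\sigma_{[i,\,i+3t-6]}$ containing this common element yields the contradiction, with (ii) obtained by circuit--cocircuit duality. The only differences are cosmetic (normalising $i=1$ and organising the first orthogonality case analysis slightly differently).
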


\begin{proof}
We will prove (i). The proof of (ii) is the same except the roles of the circuits and cocircuits are interchanged. Suppose there is some $i\in [n]$ for which~(i) holds. Then there are $t$-element circuits $C_i$ and $C_{i+2t-4}$ containing $X_i$ and $X_{i+2t-4}$, respectively, such that $c_i\not\in \sigma_{[i,\, i+3t-6]}$ and $c_{i+2t-4}\not\in \sigma_{[i,\, i+3t-6]}$. Let $C^*_{i+t-2}$ be a $t$-element cocircuit containing $X_{i+t-2}$. If $c^*_{i+t-2}\in \sigma_{[i,\, i+3t-6]}$, then $C^*_{i+t-2}$ intersects either $C_i$ or $C_{i+2t-4}$ in exactly one element, contradicting orthogonality. So $c^*_{i+t-2}\not\in \sigma_{[i,\, i+3t-6]}$. Therefore, as $C^*_{i+t-2}$ intersects each of the disjoint sets $X_i$ and $X_{i+2t-4}$ in exactly one element, it follows by orthogonality that
$$c_i=c^*_{i+t-2}=c_{i+2t-4}.$$
Now, as $n\ge 4t-6$, there exists an element $j\in [n]-[i, i+3t-6]$ such that $c_i\in X_j$ and $X_j\cap \sigma_{[i,\, i+3t-6]}=\emptyset$. By orthogonality again, this implies that any cocircuit $C^*_j$ containing $X_j$ has the property that $|C^*_j\cap X_i|\neq \emptyset$ and $|C^*_j\cap X_{i+2t-4}|\neq \emptyset$. But this is not possible as $X_i$ and $X_{i+2t-4}$ are disjoint. This contradiction completes the proof of the lemma.
\end{proof}

%
%

The next lemma is the base case for the inductive proof of Lemma~\ref{tight}.

\begin{lemma}
\label{bound}
Let $n\ge 4t-6$. For all $i\in [n]$, if $C_i$ is a $t$-element circuit containing $X_i$ and $C^*_i$ is a $t$-element cocircuit containing $X_i$, then
$$C_i, C^*_i\subseteq \sigma_{[i-(2t-4),\, i+3t-6]}.$$
\end{lemma}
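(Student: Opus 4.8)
The plan is to reduce the statement to a single one-sided bound and then to force a contradiction through orthogonality. First, since the dual matroid $M^*$ has $\sigma$ as a cyclic $(t-1,t)$-ordering with the roles of circuits and cocircuits interchanged, it suffices to prove the bound for the circuit $C_i$; the bound for $C_i^*$ then follows by applying this case to $M^*$. Likewise, reversing $\sigma$ interchanges the two ends of the ordering while fixing the symmetric window $\sigma_{[i-(2t-4),\,i+3t-6]}$, so it is enough to rule out the possibility that $c_i$ lies far to the right of $X_i$, that is, outside the window on the side beyond $e_{i+3t-6}$. The key structural observation is that this window is exactly the union of the two windows appearing in Lemma~\ref{two}, namely $\sigma_{[i,\,i+3t-6]}$ (with base index $i$) and $\sigma_{[i-(2t-4),\,i+t-2]}$ (with base index $i-(2t-4)$), which overlap precisely in $X_i$.

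Assuming $c_i$ escapes to the right, I would observe that $C_i$ then escapes both of these windows simultaneously. Applying Lemma~\ref{two}(i) at base index $i$ forces $C_{i+2t-4}$ not to escape, so $c_{i+2t-4}\in\sigma_{[i,\,i+2t-5]}$; applying it at base index $i-(2t-4)$ forces $C_{i-(2t-4)}$ not to escape, so $c_{i-(2t-4)}\in\sigma_{[i-t+3,\,i+t-2]}$. Thus the two neighbouring circuits $C_{i-(2t-4)}$ and $C_{i+2t-4}$, whose blocks $X_{i-(2t-4)}$ and $X_{i+2t-4}$ are disjoint from each other and from $X_i$, each have their extra element pulled back towards $X_i$. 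I would then play these against the cocircuit $C_i^*\supseteq X_i$: each of $C_{i-(2t-4)}$ and $C_{i+2t-4}$ is to meet $C_i^*$ in an element near $X_i$, so orthogonality forces the single extra element $c_i^*$ to lie in both $X_{i-(2t-4)}$ and $X_{i+2t-4}$, which is impossible since these blocks are disjoint. For $t=3$ this is immediate, because there the confinement sets $\sigma_{[i,\,i+2t-5]}$ and $\sigma_{[i-t+3,\,i+t-2]}$ both collapse to $X_i$ itself, so the two shared elements genuinely lie in $C_i^*$.

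The hard part, and the reason the general case is more delicate than $t=3$, is that for $t>3$ the confinement regions produced by Lemma~\ref{two} have width $2t-4$, strictly larger than the width $t-1$ of the block $X_i$ that $C_i^*$ covers. Consequently $c_{i+2t-4}$ may land in the gap $\sigma_{[i+t-1,\,i+2t-5]}$ rather than in $X_i$, and then $C_i^*$ need not meet $C_{i+2t-4}$ at all, so orthogonality says nothing. Closing this gap is the crux: I would sharpen the confinement---either by iterating Lemma~\ref{two} at further base indices and intersecting the resulting constraints, or by falling back on the far-block endgame used in the proof of Lemma~\ref{two}, choosing a block containing $c_i$ that is disjoint from all blocks in play and forcing a cocircuit through it to meet two disjoint blocks---so that the shared elements can be guaranteed to lie in $X_i=C_i^*\cap\sigma_{[i,\,i+t-2]}$. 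Throughout, the condition $n\ge 4t-6$ is what guarantees enough room for all the invoked blocks to be genuinely distinct and pairwise disjoint where required, and care with the cyclic interpretation of subscripts is needed to keep the case analysis honest.
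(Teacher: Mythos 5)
Your argument is complete only for $t=3$; for $t>3$ there is a genuine gap that you yourself flag as ``the crux'' but do not close. The two applications of Lemma~\ref{two} (at base indices $i$ and $i-(2t-4)$) only confine $c_{i+2t-4}$ to $\sigma_{[i,\,i+2t-5]}$ and $c_{i-(2t-4)}$ to $\sigma_{[i-t+3,\,i+t-2]}$, regions of width $2t-4$ which strictly contain $X_i$ once $t>3$. If, say, $c_{i+2t-4}$ lands in $\sigma_{[i+t-1,\,i+2t-5]}$, then $C^*_i=X_i\cup c^*_i$ may be disjoint from $C_{i+2t-4}$ and orthogonality yields nothing, so the intended contradiction ($c^*_i$ forced into two disjoint blocks) never materialises. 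Neither of the repair strategies you gesture at is carried out, and the first one (``intersecting the resulting constraints'' from further base indices) does not obviously sharpen the confinement of $c_{i\pm(2t-4)}$ down to $X_i$: Lemma~\ref{two} constrains a circuit only relative to the window of a block sitting $2t-4$ positions away, so repeated local applications keep producing windows of the same width.

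The paper closes this differently: it propagates the conclusion of Lemma~\ref{two} \emph{all the way around the cycle} rather than symmetrically on both sides of $X_i$. If $C_i\not\subseteq\sigma_{[i,\,i+3t-6]}$, then $C_{i+2t-4}\subseteq\sigma_{[i,\,i+3t-6]}$, hence $c_{i+2t-4}\in\sigma_{[i,\,i+2t-5]}$, which in turn means $C_{i+2t-4}$ escapes \emph{its own} right window $\sigma_{[i+2t-4,\,i+5t-10]}$, forcing $C_{i+4t-8}$ to be confined there, and so on. Iterating, $c_{i+j(2t-4)}\in\sigma_{[i+(j-1)(2t-4),\,i+j(2t-4)-1]}$ for every $j$, and taking $j=n$ wraps back to index $i$ and yields $c_i\in\sigma_{[i-(2t-4),\,i-1]}$, i.e.\ exactly the desired containment. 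The point is that the one-sided escape hypothesis is self-propagating around the whole ordering, whereas your two-sided local picture stalls because the individual confinements are too coarse. To fix your write-up you would need either to adopt this global propagation or to supply a genuinely new argument that pins $c_{i\pm(2t-4)}$ inside $X_i$; as it stands the proof is incomplete for all $t>3$.
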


\begin{proof}
Let $i\in [n]$, and suppose that $C_i$ is a $t$-element circuit containing $X_i$. If $C_i\subseteq \sigma_{[i,\, i+3t-6]}$, then $C_i\subseteq \sigma_{[i-(2t-4),\, i+3t-6]}$. Therefore assume that $C_i\not\subseteq \sigma_{[i,\, i+3t-6]}$. By Lemma~\ref{two}, this implies that if $C_{i+2t-4}$ is a $t$-element circuit containing $X_{i+2t-4}$, then $C_{i+2t-4}\subseteq \sigma_{[i,\, i+3t-6]}$, in which case, $c_{i+2t-4}\in \sigma_{[i,\ i+2t-\blue{5}]}$. By Lemma~\ref{two} again, if $C_{i+4t-8}$ is a $t$-element circuit containing $X_{i+4t-8}$, then, as $c_{i+2t-4}\in \sigma_{[i,\, i+2t-\blue{5}]}$, we have $C_{i+4t-8}\subseteq \sigma_{[i+2t-4,\, i+5t-10]}$, in which case, $c_{i+4t-8}\in \sigma_{[i+2t-4,\, i+\blue{4}t-9]}$. By next considering a $t$-element circuit $C_{i+6t-12}$ containing $X_{i+6t-12}$, applying Lemma~\ref{two}, and continuing in this way, we deduce that, for all positive integers $j$, if $C_{i+j(2t-4)}$ is a $t$-element circuit containing $X_{i+j(2t-4)}$, then
$$c_{i+j(2t-4)}\in \sigma_{[i+(j-1)(2t-4),\, i+j(2t-4)-1]}.$$
In particular, choosing $j=n$, we have $i\equiv i+n(2t-4) \bmod{n}$, and so
$$c_i\in \sigma_{[i-(2t-4),\, i-1]}.$$
Hence $C_i\subseteq \sigma_{[i-(2t-4),\, i+t-2]}$, that is,
$$C_i\subseteq \sigma_{[i-(2t-4),\, i+3t-6]}.$$
The proof for when $C^*_i$ is a $t$-element cocircuit containing $X_i$ is the same but with the roles of the circuits and cocircuits interchanged.
\end{proof}

\begin{lemma}
\label{tight}
Let $n\ge 6t-10$. For all $i\in [n]$, if $C_i$ is a $t$-element circuit containing $X_i$ and $C^*_i$ is a $t$-element cocircuit containing $X_i$, then
$$C_i, C^*_i\subseteq \sigma_{[i-1,\, i+t-1]}.$$
\end{lemma}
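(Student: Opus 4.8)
The plan is to prove, by downward induction on the radius $k$, the statement $S(k)$: for all $i\in[n]$, every $t$-element circuit $C_i$ and every $t$-element cocircuit $C^*_i$ containing $X_i$ satisfies $C_i,C^*_i\subseteq \sigma_{[i-k,\,i+t-2+k]}$. Here \Cref{bound} is exactly $S(2t-4)$, which serves as the base case, while $S(1)$ is the assertion of the lemma. Throughout, two symmetries reduce the work to a single representative case: reversing $\sigma$ interchanges leftward and rightward escapes, and passing to the dual interchanges circuits and cocircuits. So I would assume for a contradiction that some circuit $C_i$ has its unique extra element $c_i=e_{i+t-2+k}$ lying at distance exactly $k$ to the right of $X_i$, and aim to show this is impossible once $k\ge 2$.

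The engine of the inductive step is a refined orthogonality transfer modelled on \Cref{two}. Positioning an auxiliary set so that its leftmost element is exactly $c_i$—concretely, considering the cocircuit $C^*_{i+t-2+k}$ through $X_{i+t-2+k}$, which is disjoint from $X_i$ but contains $c_i$—orthogonality with $C_i$ forces $C^*_{i+t-2+k}\cap C_i$ to contain more than one element. Since $S(k)$ already confines $c^*_{i+t-2+k}$ to within distance $k$ of $X_{i+t-2+k}$, the only way to supply a second common element is $c^*_{i+t-2+k}=e_{i+t-2}$; that is, the cocircuit must escape left by exactly $k$. Thus a rightward circuit-escape of size $k$ forces a leftward cocircuit-escape of size $k$, and dually.

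A single such transfer merely exchanges a rightward circuit-escape for a leftward cocircuit-escape, and iterating it naively only oscillates between the two at the same pair of indices; breaking this oscillation is the main obstacle. To convert the transfers into a genuine contradiction I would chain them around the whole cyclic order, exactly as in the endgame of \Cref{bound}: each escape of size $k$ forces, via orthogonality with a cocircuit placed further along and confined by $S(k)$, the next circuit in a sequence of indices spaced by $2t-4$ to be confined, pinning its own escape element to the appropriate end of its window; following the sequence once around returns to $C_i$ and forces $c_i$ to lie on the \emph{left}, contradicting $c_i=e_{i+t-2+k}$. This propagation is where $n\ge 6t-10=(4t-6)+(2t-4)$ enters: it supplies precisely the one extra block of $2t-4$ consecutive elements, beyond what \Cref{bound} required, needed to place the auxiliary cocircuit disjointly from both $C_i$ and its forced partner while still closing the cycle.

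In carrying this out, the delicate points will be the index arithmetic modulo $n$ and the verification, at each link of the chain, that the chosen circuit and cocircuit meet in exactly one element, so that orthogonality genuinely applies; the four-way case split (circuit versus cocircuit, left versus right escape) must be tracked carefully so that the chain does not degenerate into a locked oscillating pair. Once $S(k-1)$ is established from $S(k)$, repeating the step down to $S(1)$ yields $C_i,C^*_i\subseteq \sigma_{[i-1,\,i+t-1]}$, as required.
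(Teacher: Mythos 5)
Your framework coincides with the paper's: a downward induction on the radius with Lemma~\ref{bound} as the base case, reduction to a single representative case via reversal of $\sigma$ and duality, and your ``engine'' is precisely the paper's first step --- the cocircuit $C^*_{i+(t-2)+k}$ contains $c_i=e_{i+(t-2)+k}$, its window is disjoint from $X_i$, and the induction hypothesis $S(k)$ pins its extra element to $e_{i+(t-2)}$. The gap is in your endgame. The chaining mechanism of Lemmas~\ref{two} and~\ref{bound} only fires when a circuit's extra element lies \emph{outside} the window $\sigma_{[i,\,i+3t-6]}$ shared by $X_i$ and $X_{i+2t-4}$; under your hypothesis $c_i=e_{i+(t-2)+k}$ with $k\le 2t-4$, the element $c_i$ lies \emph{inside} that window, so the chain cannot be launched from $C_i$ at all. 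Launching it instead from the forced objects at indices $i+(t-2)+k$ or $i+(2t-4)+k$ does not close up: those indices differ from $i$ by $k$ modulo the step size $2t-4$, so ``once around the cycle'' returns you to an index near $i+k$, not to $i$, and yields no constraint on $c_i$. Moreover, the individual links only pin each new escape element to within distance $\min(k,t-2)$ of the left end of its window rather than ``to the appropriate end,'' so the propagated information is too weak to force the sharp conclusion you want even if the cycle did close.

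The paper breaks the oscillation you correctly identify \emph{locally}, not by a global chain, and this is the idea missing from your proposal. Alongside $C^*_{i+(t-2)+k}$ it considers the cocircuit $C^*_{i+(t-2)+(k-1)}$, whose window $X_{i+(t-2)+(k-1)}$ also contains $c_i$ because $t\ge 3$; the same orthogonality-plus-$S(k)$ argument forces $c^*_{i+(t-2)+(k-1)}\in\{e_{i+t-3},e_{i+t-2}\}$. Then the circuit $C_{i+(2t-4)+k}$ meets $C^*_{i+(t-2)+k}$ in $e_{i+(2t-4)+k}$, so orthogonality and $S(k)$ force its extra element into $X_{i+(t-2)+k}-e_{i+(2t-4)+k}$, a set contained in $C^*_{i+(t-2)+(k-1)}$. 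Since $X_{i+(2t-4)+k}$ is disjoint from $X_{i+(t-2)+(k-1)}$ and avoids $\{e_{i+t-3},e_{i+t-2}\}$, the intersection $C^*_{i+(t-2)+(k-1)}\cap C_{i+(2t-4)+k}$ consists of that single extra element, contradicting orthogonality --- all within a window of roughly $3t$ consecutive elements. You would need to supply this second, offset cocircuit (or some equivalent local device) to complete the inductive step; as written, the proof does not go through.
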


\begin{proof}
We establish the lemma using induction by showing that, for all $1\le j\le 2t-4$,
$$C_i, C^*_i\subseteq \sigma_{[i-j,\, i+(t-2)+j]}.$$
If $j=2t-4$, then, by Lemma~\ref{bound},
$$C_i, C^*_i\subseteq \sigma_{[i-(2t-4),\, i+(t-2)+(2t-4)]}$$
for all $i\in [n]$. Now suppose that, for all $i\in [n]$,
$$C_i, C^*_i\subseteq \sigma_{[i-(j+1),\, i+(t-2)+(j+1)]},$$
where $1\le j\le 2t-5$. We next show that $C_i\subseteq \sigma_{[i-j,\, i+(t-2)+j]}$. The proof that $C^*_i\subseteq \sigma_{[i-j,\, i+(t-2)+j]}$ is the same except the roles of the circuits and cocircuits are interchanged.

If, for some $i\in [n]$, we have
$$C_i\not\subseteq \sigma_{[i-j,\, i+(t-2)+j]},$$
then, up to reversing the cyclic ordering, we may assume by the induction assumption that $C_i=X_i\cup e_{i+(t-2)+(j+1)}$. Since $t\ge 3$, each of $X_{i+(t-2)+j}$ and $X_{i+(t-2)+(j+1)}$ contains $e_{i+(t-2)+(j+1)}$. But, as $n\ge 6t-10$, each of $X_{i+(t-2)+j}$ and $X_{i+(t-2)+(j+1)}$ has an empty intersection with $X_i$, it follows by orthogonality that if $C^*_{i+(t-2)+j}$ and $C^*_{i+(t-2)+(j+1)}$ are $t$ element cocircuits containing $X_{i+(t-2)+j}$ and $X_{i+(t-2)+(j+1)}$, respectively, then
$$\{c^*_{i+(t-2)+j}, c^*_{i+(t-2)+(j+1)}\}\subseteq X_i.$$
By the induction assumption, $c^*_{i+(t-2)+(j+1)}=e_{i+(t-2)}$ and $c^*_{i+(t-2)+j}\in \{e_{i+(t-3)}, e_{i+(t-2)}\}$. The first of these outcomes implies that $C^*_{i+(t-2)+(j+1)}=X_{i+(t-2)+(j+1)}\cup e_{i+(t-2)}$. Thus if $C_{i+(2t-4)+(j+1)}$ is a $t$-element circuit containing $X_{i+(2t-4)+(j+1)}$, then, as $e_{i+(2t-4)+(j+1)}\in C^*_{i+(t-2)+(j+1)}$ and $n\ge 6t-10$, it follows by orthogonality and the induction assumption,
$$c_{i+(2t-4)+(j+1)}\in X_{i+(t-2)+(j+1)}-e_{i+(2t-4)+(j+1)}.$$
Now $X_{i+(t-2)+(j+1)}-e_{i+(2t-4)+(j+1)}\subseteq C^*_{i+(t-2)+j}$, and so $c_{i+(2t-4)+(j+1)}\in C^*_{i+(t-2)+j}$. But then, as $c^*_{i+(t-2)+j}\in \{e_{i+(t-3)}, e_{i+(t-2)}\}$ and $n\ge 6t-10$, we have
$$|C^*_{i+(t-2)+j}\cap C_{i+(2t-4)+(j+1)}|=1,$$
contradicting orthogonality. Hence, for all $i\in [n]$,
$$C_i\subseteq \sigma_{[i-j,\, i+(t-2)+j]},$$
thereby completing the proof of the lemma.
\end{proof}

The next lemma shows that, for all $i\in [n]$, there is a unique $t$-element circuit containing $X_i$ and a unique $t$-element cocircuit containing $X_i$.

\begin{lemma}
\label{unique}
Let $n\ge 6t-10$ and let $i\in [n]$.
\begin{enumerate}[{\rm (i)}]
\item If $C_i$ and $D_i$ are two $t$-element circuits containing $X_i$, then $C_i=D_i$.

\item If $C^*_i$ and $D^*_i$ are two $t$-element cocircuits containing $X_i$, then $C^*_i=D^*_i$.
\end{enumerate}
\end{lemma}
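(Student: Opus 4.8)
The plan is to exploit \cref{tight}, which already confines every $t$-element circuit and cocircuit through $X_i$ to the window $\sigma_{[i-1,\,i+t-1]}$. Since this window has exactly $t+1$ elements and $X_i$ accounts for $t-1$ of them, any $t$-element circuit $C_i$ containing $X_i$ must satisfy $c_i\in\{e_{i-1},e_{i+t-1}\}$, and likewise for cocircuits. Thus two distinct circuits containing $X_i$ can only be $C_i=X_i\cup e_{i-1}$ and $D_i=X_i\cup e_{i+t-1}$, so to prove (i) it suffices to show that $C_i=X_i\cup e_{i-1}$ and $D_i=X_i\cup e_{i+t-1}$ cannot both be circuits. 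Part (ii) will then follow by the dual argument, interchanging the roles of circuits and cocircuits (the hypotheses and the ordering $\sigma$ are symmetric under this interchange).

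The core of the argument is a two-step orthogonality deduction using an auxiliary cocircuit on the adjacent block. Suppose for contradiction that both $C_i$ and $D_i$ are circuits. By the cyclic $(t-1,t)$-property there is a $t$-element cocircuit $C^*_{i+t-1}$ containing the block $X_{i+t-1}=\sigma_{[i+t-1,\,i+2t-3]}$, and by \cref{tight} its extra element satisfies $c^*_{i+t-1}\in\{e_{i+t-2},e_{i+2t-2}\}$. First I would apply orthogonality to $D_i=\{e_i,e_{i+1},\dotsc,e_{i+t-1}\}$ and $C^*_{i+t-1}$: their only possible common elements are $e_{i+t-1}$ (forced) and $e_{i+t-2}$ (present only if $c^*_{i+t-1}=e_{i+t-2}$), so to avoid a single-element intersection we are forced to conclude $c^*_{i+t-1}=e_{i+t-2}$, i.e. $C^*_{i+t-1}=\sigma_{[i+t-2,\,i+2t-3]}$. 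Second, I would apply orthogonality to $C_i=\{e_{i-1},e_i,\dotsc,e_{i+t-2}\}$ and this now-determined cocircuit $C^*_{i+t-1}=\{e_{i+t-2},e_{i+t-1},\dotsc,e_{i+2t-3}\}$: the two sets meet only in $e_{i+t-2}$, giving $|C_i\cap C^*_{i+t-1}|=1$ and contradicting orthogonality. This contradiction rules out both $C_i$ and $D_i$ being circuits, proving (i).

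Throughout, the intersection computations rely on there being no wrap-around of the cyclic ordering among the indices in the range $[i-1,\,i+2t-3]$, which spans $2t-1$ elements; this is guaranteed by $n\ge 6t-10>2t-1$ for $t\ge 3$, so all the listed elements are genuinely distinct. I expect the main obstacle to be identifying that $C^*_{i+t-1}$ (the cocircuit on the block \emph{immediately to the right} of $X_i$) is the correct gadget and then executing the forced-shape step cleanly: it is the combination of first using $D_i$ to pin down $c^*_{i+t-1}$ and only afterwards using $C_i$ against the pinned-down cocircuit that produces the single-element intersection. Everything else is bookkeeping with the windows supplied by \cref{tight}, and the cocircuit statement (ii) is immediate by duality.
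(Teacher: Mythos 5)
Your proof is correct and is essentially the paper's argument in mirror image: the paper runs the same two-step orthogonality deduction using the cocircuit $C^*_{i-(t-1)}$ on the block immediately to the \emph{left} of $X_i$ (first pinning its extra element into $X_i$ via orthogonality with the left-extending circuit, then getting a single-element intersection with the right-extending circuit), whereas you use the block to the right. Both rely on Lemma~\ref{tight} to reduce to the two candidate circuits $X_i\cup e_{i-1}$ and $X_i\cup e_{i+t-1}$ and dispose of part (ii) by interchanging circuits and cocircuits, so the approaches coincide.
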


\begin{proof}
To prove (i), suppose that $C_i\neq D_i$. Then, by Lemma~\ref{tight}, we may assume without loss of generality that $C_i=X_i\cup e_{i+t-1}$ and $D_i=X_i\cup e_{i-1}$. Let $C^*_{i-(t-1)}$ be a $t$-element cocircuit containing $X_{i-(t-1)}$. Since $|X_{i-(t-1)}\cap D_i|=1$, it follows by orthogonality that $c^*_{i-(t-1)}\in D_i-e_{i-1}$. But then $|C^*_{i-(t-1)}\cap C_i|=1$, contradicting orthogonality. Hence $C_i=D_i$. The proof of (ii) is the same but with the roles of the circuits and cocircuits interchanged.
\end{proof}

\begin{lemma}
\label{same}
Let $n\ge 6t-10$.
\begin{enumerate}[{\rm (i)}]
\item For some $i\in [n]$, let $C_i$ be a $t$-element circuit containing $X_i$ and suppose that $C_i=\sigma_{[i,\, i+t-1]}$.
\blue{If} $j\equiv i \bmod{2}$, then $C_j=\sigma_{[j,\, j+t-1]}$ and $C_{j+1}=\sigma_{[j,\, j+t-1]}$, where $C_j$ and $C_{j+1}$ are any $t$-element circuits containing $X_j$ and $X_{j+1}$, respectively.

\item For some $i\in [n]$, let $C^*_i$ be a $t$-element cocircuit containing $X_i$ and suppose that $C^*_i=\sigma_{[i,\, i+t-1]}$.
\blue{If} $j\equiv i\bmod{2}$, then $C^*_j=\sigma_{[j,\, j+t-1]}$ and $C^*_{j+1}=\sigma_{[j,\, j+t-1]}$, where $C^*_j$ and $C^*_{j+1}$ are any $t$-element cocircuits containing $X_j$ and $X_{j+1}$, respectively.
\end{enumerate}
\end{lemma}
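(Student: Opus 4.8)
The plan is to reduce the statement to a single structural fact about the \emph{direction} of each circuit. By \cref{tight} and \cref{unique}, for each $k\in[n]$ the circuit $C_k$ is the unique $t$-element circuit containing $X_k$, and since $C_k\subseteq\sigma_{[k-1,\,k+t-1]}$ we have $c_k\in\{e_{k-1},e_{k+t-1}\}$; as $n\ge 6t-10>t$ these are distinct, so exactly one of $C_k=\sigma_{[k,\,k+t-1]}$ (I will call $C_k$ \emph{forward}) and $C_k=\sigma_{[k-1,\,k+t-2]}$ ($C_k$ \emph{backward}) holds, and likewise for each cocircuit $C^*_k$. With this language, proving~(i) amounts to showing that, starting from the forward circuit $C_i$, the directions of the circuits strictly alternate along $\sigma$: then $C_j$ is forward (so $C_j=\sigma_{[j,\,j+t-1]}$) for every $j\equiv i\bmod 2$, while $C_{j+1}$ is backward (so $C_{j+1}=\sigma_{[j,\,j+t-1]}$), which is exactly the assertion.

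The engine is orthogonality between a circuit and a cocircuit whose windows of $t$ consecutive positions meet in a single element. First I would record two biconditionals obtained by testing the pairs $(C_k,C^*_{k+t-1})$ and $(C_k,C^*_{k-t+1})$: in each case, if the circuit and cocircuit had the \emph{same} direction their length-$t$ windows would overlap in exactly one element, contradicting orthogonality; hence $C_k$ and $C^*_{k+t-1}$ always have opposite directions, and likewise $C_k$ and $C^*_{k-t+1}$. Next I would record two one-sided forcing facts from the pairs $(C_k,C^*_{k+t})$ and $(C_k,C^*_{k-t})$: if $C_k$ is forward then $C^*_{k+t}$ must be forward, since a backward $C^*_{k+t}$ would meet the forward $C_k$ only in $e_{k+t-1}$; dually, if $C_k$ is backward then $C^*_{k-t}$ must be backward, as a forward $C^*_{k-t}$ would meet the backward $C_k$ only in $e_{k-1}$. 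Throughout, $n\ge 6t-10$ ensures that the windows do not wrap around, so these overlap counts are exact.

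Combining these, suppose $C_i$ is forward. The first forcing fact gives $C^*_{i+t}$ forward, and applying the first biconditional with $k=i+1$ (so that $C^*_{i+t}=C^*_{(i+1)+t-1}$ is opposite in direction to $C_{i+1}$) forces $C_{i+1}$ backward. Symmetrically, if $C_i$ is backward then the second forcing fact gives $C^*_{i-t}$ backward, and the second biconditional with $k=i-1$ forces $C_{i-1}$ forward. A short case check then shows that consecutive circuits always have opposite directions: the first implication settles the forward case directly, and for the backward case one applies the second implication at index $i+1$ to rule out $C_{i+1}$ being backward. Hence the directions alternate along $\sigma$, which yields~(i), and~(ii) follows by the identical argument with the roles of circuits and cocircuits interchanged (equivalently, by applying~(i) to $M^*$). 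The main obstacle is aligning the offset-$t$ forcing with the offset-$(t-1)$ reversal: the two biconditionals on their own only yield that the directions have period $2(t-1)$, and it is the forcing step that sharpens this to genuine alternation.
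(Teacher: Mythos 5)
Your proposal is correct, but it takes a genuinely different route from the paper. The paper's proof of (i) stays entirely inside the family of circuits: since $X_{i+1}\subseteq C_i$, \cref{unique} immediately gives $C_{i+1}=C_i=\sigma_{[i,\,i+t-1]}$; then \cref{tight} leaves only two candidates for $C_{i+2}$, and the ``backward'' candidate $\sigma_{[i+1,\,i+t]}$ is excluded because it would contain $X_{i+1}$ and hence, by uniqueness again, coincide with $C_{i+1}$ --- so $C_{i+2}=\sigma_{[i+2,\,i+t+1]}$, and one iterates. Cocircuits never enter until part (ii), which is handled by interchanging roles. Your argument instead classifies each $C_k$ and $C^*_k$ as forward or backward (legitimate by \cref{tight,unique}) and drives the alternation through orthogonality between circuit and cocircuit windows at offsets $t-1$ and $t$; I checked the four intersection counts and the two forcing implications, and they are all exact under $n\ge 6t-10$ (no wrap-around occurs, and your closing remark correctly identifies why the offset-$t$ forcing is needed to break the period-$2(t-1)$ ambiguity left by the biconditionals alone). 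The trade-off: the paper's induction is shorter and makes the ``$C_{j+1}=C_j$'' half of the conclusion completely transparent, whereas your argument is more symmetric in the circuits and cocircuits and makes visible the same circuit--cocircuit interlocking that the main theorem ultimately asserts; but it uses the existence of the cocircuits $C^*_k$ even for part (i), which the paper's proof does not need.
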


\begin{proof}
We will prove (i), as the proof of (ii) is the same except the roles of the circuits and cocircuits are interchanged.
\blue{Since $X_{i+1}\subseteq C_i$, it follows by Lemma~\ref{unique} that} $C_{i+1}=\sigma_{[i,\, i+t-1]}$. \blue{Thus,} by Lemma~\ref{tight}, if $C_{i+2}$ is a $t$-element circuit containing $X_{i+2}$, then $C_{i+2}=\sigma_{[i+2,\, i+t+1]}$, and so $C_{i+3}=\blue{\sigma}_{[i+2,\, i+t+1]}$, where $C_{i+3}$ is any $t$-element circuit containing $X_{i+3}$. Continuing in this way establishes (i).
\end{proof}

\begin{proof}[Proof of Theorem~\ref{main1}]
It immediately follows from Lemmas~\ref{unique} and~\ref{same} that $n$ is even and, for all $i\in [n]$, there is a unique $t$-element circuit and a unique $t$-element cocircuit containing $X_i$. We next establish (I) and (II).

Up to reversing the ordering of $\sigma$, we may assume, by Lemmas~\ref{tight} and~\ref{unique}, that the unique $t$-element circuit containing $X_i$ is $C_i=X_i\cup e_{i+t-1}$. Let $C^*_i$ denote the unique $t$-element cocircuit containing $X_i$. First suppose $t$ is odd. Say
$$C_i=\sigma_{[i,\, i+t-1]}=C^*_i.$$
By Lemma~\ref{same} the unique $t$-element circuit containing $X_{i+t-1}$ is $C_{i+t-1}=\sigma_{[i+t-1,\, i+2t-2]}$. But then $|C^*_i\cap C_{i+(t-1)}|=1$; a contradiction. Thus, by Lemma~\ref{tight}, $C^*_i=\sigma_{[i-1,\, i+t-1]}$. Part~(I) now follows from Lemma~\ref{same}.

Now suppose $t$ is even, and say $C^*_i=\sigma_{[i-1,\, i+t-2]}$. By Lemma~\ref{same}, the unique $t$-element circuit containing $X_i$ is $C_{i+t-1}=\sigma_{[i+t-1,\, i+2t-2]}$. But then
$$|C^*_i\cap C_{i+t-1}|=1,$$
contradicting orthogonality. Therefore, by Lemma~\ref{tight}, $C^*_i=\sigma_{[i,\, i+t-1]}$. Part~(II) immediately follows from Lemma~\ref{same}.
\end{proof}

\section{$t$-Cyclic Matroids}
\label{tcyclic}

Let $M$ be a matroid with $n=|E(M)|$, and let $t$ be a positive integer such that $n\ge t+1$. Recall that $M$ is {\em $t$-cyclic} if there exists a cyclic ordering $\sigma=(e_1, e_2, \ldots, e_n)$ of $E(M)$ such that, for all odd $i\in [n]$, either
\begin{enumerate}[{\rm (i)}]
\item $\{e_i, e_{i+1}, \ldots, e_{i+t-1}\}$ is a $t$-element circuit and $\{e_{i+1}, e_{i+2}, \ldots, e_{i+t}\}$ is a $t$-element cocircuit, or

\item $\{e_i, e_{i+1}, \ldots, e_{i+t-1}\}$ is both a $t$-element circuit and a $t$-element cocircuit.
\end{enumerate}
If $\sigma$ is such an ordering of $E(M)$, then $\sigma$ is called a {\em $t$-cyclic ordering} of $M$. Moreover, $\sigma$ is {\em odd} if it satisfies (i) and {\em even} if it satisfies (ii).

Wheels and whirls with at least four elements are $3$-cyclic matroids. Furthermore, if $M$ is $t$-cyclic for some integer $t\ge 2$, then $M$ has the cyclic $(t-1, t)$-property. In fact, if $t=2$, or $t\ge 3$ and the ground set of $M$ is sufficiently large, then the converse also holds.

\begin{proposition}
\label{coincide}
Let $M$ be a matroid with $n=|E(M)|$, and suppose that $t=2$, or $t\ge 3$ and $n\ge 6t-10$. Then $M$ is $t$-cyclic if and only if it has the cyclic $(t-1, t)$-property.
\end{proposition}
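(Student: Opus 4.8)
The plan is to prove Proposition~\ref{coincide} by establishing both directions of the equivalence, with the forward direction being essentially immediate and the reverse direction relying on Theorem~\ref{main1}. For the forward implication, suppose $M$ is $t$-cyclic with $t$-cyclic ordering $\sigma$. I would simply observe that in both the odd and even cases, every $t-1$ consecutive elements of $\sigma$ are contained in a $t$-element circuit and a $t$-element cocircuit: indeed, the defining conditions (i) and (ii) provide $t$-element circuits and cocircuits on windows of length $t$ starting at each odd index, and since consecutive such windows overlap, every window of length $t-1$ sits inside one of them. Thus $\sigma$ witnesses the cyclic $(t-1,t)$-property, and this direction holds without any size hypothesis.

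For the reverse implication, I would split on the value of $t$. When $t=2$, the cyclic $(1,2)$-property forces each element to lie in a $2$-element circuit (a parallel pair) and a $2$-element cocircuit (a series pair); as noted in the introduction, such an $M$ is a direct sum of copies of $U_{1,2}$, and one checks directly that the obvious ordering pairing each parallel pair together is a $2$-cyclic ordering. When $t\ge 3$ and $n\ge 6t-10$, the heavy lifting is done by Theorem~\ref{main1}: the cyclic $(t-1,t)$-ordering $\sigma$ satisfies $n$ even, and for each $i$ there is a unique $t$-element circuit and a unique $t$-element cocircuit containing $X_i$. I would then read off from parts (I) and (II) exactly the structure required by the definition of $t$-cyclic.

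The key step is translating the conclusions of Theorem~\ref{main1} into conditions (i) and (ii). After reversing the ordering if necessary (as in the proof of Theorem~\ref{main1}), assume the unique $t$-element circuit containing $X_i$ is $\sigma_{[i,\,i+t-1]}$. If $t$ is odd, the proof of Theorem~\ref{main1} shows $C^*_i=\sigma_{[i-1,\,i+t-1]}$, equivalently the unique cocircuit containing $X_i$ is $\sigma_{[i-1,\,i+t-2]}$; reindexing, for each odd $i$ the set $\sigma_{[i,\,i+t-1]}$ is a circuit and $\sigma_{[i+1,\,i+t]}$ is a cocircuit, which is exactly condition~(i), giving an odd $t$-cyclic ordering. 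If $t$ is even, Theorem~\ref{main1}(II) gives that $\sigma_{[i,\,i+t-1]}$ is simultaneously a circuit and a cocircuit, which is condition~(ii), giving an even $t$-cyclic ordering. In either case I must check that the alternating parity structure in Theorem~\ref{main1} (parts (I)(iii) and (II)(iii), via Lemma~\ref{same}) aligns the circuit/cocircuit windows with the odd indices demanded by the definition.

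I expect the main obstacle to be this last bookkeeping point: the definition of $t$-cyclic quantifies over odd $i$, whereas Theorem~\ref{main1} describes the circuits and cocircuits for all $i$ in terms of parity classes. I would resolve it by noting that $n$ is even, so the two parity classes of indices are well-defined on the cyclic order, and by possibly shifting the labelling so that the window known to be a circuit starts at an odd index; the parity statements (I)(iii) and (II)(iii) then guarantee the pattern repeats correctly around the entire cycle, confirming that $\sigma$ (suitably relabelled) is a $t$-cyclic ordering and completing the proof.
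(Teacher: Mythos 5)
Your proposal is correct and follows essentially the same route as the paper: the forward direction is immediate from the definition, the $t=2$ case is handled via the direct sum of copies of $U_{1,2}$, and the $t\ge 3$ case is deduced from Theorem~\ref{main1}. The paper states the last step in one sentence, whereas you spell out the translation of parts (I) and (II) into conditions (i) and (ii) together with the parity relabelling; that bookkeeping is accurate but not a different argument.
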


\begin{proof}
Evidently, if $M$ is $t$-cyclic, then $M$ has the cyclic $(t-1, t)$-property. For the converse, if $t=2$ and $M$ has the cyclic $(1, 2)$-property, then $M$ is the direct sum of copies of $U_{1, 2}$, and so a cyclic ordering of $E(M)$ in which the two elements in each copy of $U_{1, 2}$ are consecutive is a $2$-cyclic ordering of $M$. Furthermore, if $t\ge 3$ and $M$ has the cyclic $(t-1, t)$-property, then, as $n\ge 6t-10$, it follows by Theorem~\ref{main1} that $M$ is $t$-cyclic.
\end{proof}

We next establish several basic properties of $t$-cyclic matroids. First note that, by definition, if $M$ is a $t$-cyclic matroid for some $t\ge 1$, then $M^*$ is also $t$-cyclic.

\begin{lemma}
\label{size}
Let $t\ge 1$ and let $M$ be a $t$-cyclic matroid. Then
\begin{enumerate}[{\rm (i)}]
\item $|E(M)|\ge 2t-2$, and

\item $|E(M)|$ is even.
\end{enumerate}
\end{lemma}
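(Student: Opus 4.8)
The plan is to treat the two parts separately: part (i) follows from a rank/corank count, and part (ii) from an orthogonality argument, neither of which needs the large-$n$ hypotheses of Section~\ref{proof1}.

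For (i), I would note that, applying the definition at the odd index $1\in[n]$ (which exists since $n\ge t+1$), the matroid $M$ has a $t$-element circuit $C=\{e_1,\ldots,e_t\}$ and a $t$-element cocircuit $C^*$ (the window $\{e_1,\ldots,e_t\}$ itself in case (ii), or the shifted window in case (i)). Since $C$ is a circuit of size $t$, it is a minimal dependent set, so $r(C)=t-1$, and monotonicity of the rank function gives $r(M)\ge t-1$; dually $r^*(M)\ge t-1$. As $r(M)+r^*(M)=|E(M)|=n$, this yields $n\ge 2(t-1)=2t-2$, which is (i). The values $t=1,2$ are consistent with this and require no special treatment.

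For (ii), I would argue by contradiction: suppose $n$ is odd. By (i), $n\ge 2t-2$, and since $2t-2$ is even this sharpens to $n\ge 2t-1$; this inequality is precisely what guarantees that a length-$t$ window abutting $C_1=\{e_1,\ldots,e_t\}$ on either side meets it in a single element rather than wrapping around to overlap it twice. The circuit $C_1$ is available at the odd index $1$ in either type of ordering. The crux is that the two abutting windows $W_R=\{e_t,\ldots,e_{2t-1}\}$ and $W_L=\{e_{n+2-t},\ldots,e_n,e_1\}$ each share exactly one element with $C_1$ (namely $e_t$ and $e_1$), while their left endpoints $t$ and $n+2-t$ have opposite parity, since their sum $n+2$ is odd. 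The left endpoints of the $t$-cocircuit windows supplied by the definition fill out one parity class (the odd indices for an even ordering, the even indices for an odd ordering), so exactly one of $W_R,W_L$ is one of these cocircuits. That cocircuit meets the circuit $C_1$ in exactly one element, contradicting orthogonality; hence $n$ is even.

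The main obstacle is the bookkeeping concealed in the final step: one must check, across the four combinations of ordering type (odd/even) with the parity of $t$, that the relevant abutting window really is a \emph{declared} $t$-cocircuit, and handle the single cyclic ``defect'' that an odd $n$ introduces at the wrap-around window (the window whose start index is $n+1\equiv 1$, which behaves as an odd start even in an odd ordering). I would dispatch this with a short explicit case table and verify the degenerate cases $t=1,2$ by hand: for $t=1$ the two abutting windows coincide at $\{e_1\}$, and the contradiction is simply that $e_1$ would then be simultaneously a loop and a coloop.
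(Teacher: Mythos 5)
Your proof is correct and follows essentially the same route as the paper's: part (i) is the same count via $r(M)+r^*(M)=|E(M)|$ (you bound the rank and corank from below using the $t$-element circuit and cocircuit, where the paper bounds them from above via the complementary hyperplane and cohyperplane), and part (ii) is the same orthogonality contradiction between the circuit $\{e_1,\ldots,e_t\}$ and an abutting $t$-element cocircuit window. If anything, your parity analysis of which abutting window is actually one of the declared cocircuits is more careful than the paper's, which simply exhibits the left-abutting window and asserts it works regardless of the parity of $t$.
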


\begin{proof}
Let $n=|E(M)|$. We first establish (i). Since $M$ is $t$-cyclic and $n\ge t+1$, it follows that $M$ has a $t$-element circuit and a $t$-element cocircuit. That is, $M$ has an $(n-t)$-element cohyperplane and an $(n-t)$-element hyperplane, and so $r^*(M)-1\le n-t$ and $r(M)-1\le n-t$. Therefore
$$n=r^*(M)+r(M)\le 2n-2t+2.$$
In particular, $n\ge 2t-2$.

To prove (ii), suppose $n$ is odd. Then, regardless of whether $t$ is odd or even, $\{e_1, e_2, \ldots, e_t\}$ is a $t$-element circuit $C$ and $\{e_{n-(t-2)}, e_{n-(t-3)}, \ldots, e_1\}$ is a $t$-element cocircuit $C^*$. By (i), $n\ge 2t-2$ and so, as $n$ is odd, $n\ge 2t-1$. In turn, this implies that $|C\cap C^*|=1$, contradicting orthogonality. It follows that $n$ is even.
\end{proof}

\begin{lemma}
\label{rank}
Let $t\ge 1$, and let $M$ be a $t$-cyclic matroid. Then
$$r(M)=r^*(M)={\textstyle \frac{1}{2}}|E(M)|.$$
\end{lemma}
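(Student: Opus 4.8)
The plan is to reduce the statement to a single inequality and then to prove that inequality by bounding the rank directly from the distinguished $t$-element circuits.

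Since $r^*(M)=|E(M)|-r(M)$ for every matroid, we always have $r(M)+r^*(M)=|E(M)|$, and by \cref{size} the number $|E(M)|$ is even. Hence it suffices to prove $r(M)=r^*(M)$, and for this it is enough to establish the single inequality
$$r(M)\le \tfrac{1}{2}|E(M)|$$
for every $t$-cyclic matroid. Indeed, as noted before \cref{size}, the dual $M^*$ is also $t$-cyclic, so applying this inequality to $M^*$ gives $r^*(M)=r(M^*)\le \tfrac{1}{2}|E(M)|$; together with $r(M)+r^*(M)=|E(M)|$ this forces both ranks to equal $\tfrac{1}{2}|E(M)|$. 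So from this point the goal is only the upper bound on $r(M)$.

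To bound $r(M)$, write $n=|E(M)|$ and observe that, whether a given odd index $i$ is of type (i) or type (ii) in the definition, the set $C_i=\{e_i,e_{i+1},\ldots,e_{i+t-1}\}$ is a $t$-element circuit; there are exactly $n/2$ such circuits and together they cover $E(M)$. I would exhibit a spanning set of size $n/2$, or equivalently delete $n/2$ elements one at a time, each lying in the closure of what remains. The engine is that consecutive circuits overlap in a large independent set: for odd $i$ the intersection $C_i\cap C_{i+2}=\{e_{i+2},\ldots,e_{i+t-1}\}$ has $t-2$ elements and, being a proper subset of the circuit $C_{i+2}$, is independent of rank $t-2$. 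Submodularity then gives, for $A=\{e_1,\ldots,e_k\}$ and the circuit $C$ ending at position $k+2$,
$$r(A\cup C)\le r(A)+r(C)-r(A\cap C)=r(A)+(t-1)-(t-2)=r(A)+1,$$
so scanning the ordering from $e_1$ and adjoining elements two at a time raises the rank by at most one per step, with the first $t$ elements having rank $t-1$ already, since they form the circuit $C_1$.

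The step I expect to be the main obstacle is the cyclic wrap-around. The prefix scan just described uses only the circuits $C_i$ lying in an initial segment, and on its own yields the weaker bound $r(M)\le \tfrac{1}{2}n+\tfrac{1}{2}t-1$; the surplus, which is of order $\tfrac{1}{2}t$, must be recovered from the remaining circuits $C_i$ that straddle the seam between $e_n$ and $e_1$. A direct count shows that the number of these seam-crossing circuits equals the surplus exactly, so the bound will tighten to $\tfrac{1}{2}n$ provided each such circuit contributes a genuinely new dependency among the retained generators and thereby removes one further element. I would formalise this either by producing a single global elimination order (choosing, for each distinguished circuit, an element to delete so that the deletions can be linearly ordered with each deleted element in the closure of the survivors) or by a Hall-type argument selecting such a system of distinct representatives, using that any $k$ of the length-$t$ windows span at least $2k+t-2>k$ positions so that the relevant matching exists. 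Once $r(M)\le \tfrac{1}{2}n$ is in hand, the duality reduction of the first paragraph completes the proof.
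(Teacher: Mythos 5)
Your reduction to the single inequality $r(M)\le\frac{1}{2}|E(M)|$ via duality is sound (the paper notes just before Lemma~\ref{size} that $M^*$ is again $t$-cyclic), and the submodularity scan along a prefix is correct as far as it goes. But it only goes as far as you admit: $r(M)\le\frac{1}{2}n+\frac{1}{2}t-1$, and the step recovering the surplus from the seam-crossing circuits is exactly the part you have not proved. Neither proposed repair closes it. The Hall-type argument is insufficient in principle: the existence of a system of distinct representatives for a family of circuits covering $E(M)$ does not bound the nullity below by the number of circuits. In $U_{2,4}$ with ground set $\{a,b,c,d\}$, the circuits $\{a,b,c\},\{a,b,d\},\{a,c,d\}$ cover the ground set and admit the SDR $(c,d,a)$, yet the nullity is $2$, not $3$; deleting an SDR need not leave a spanning set. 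The global elimination order, on the other hand, is precisely where the seam bites: the natural order (delete $e_i$ using the circuit $C_i=\{e_i,\ldots,e_{i+t-1}\}$ for $i=1,3,5,\ldots$) fails for the wrap-around circuits, since $C_i-e_i$ then contains odd-indexed elements you have already deleted. Asserting that each such circuit ``contributes a genuinely new dependency'' is a restatement of the goal, not an argument.

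The missing idea, which is how the paper proceeds, is to bring in a cocircuit. Since $\{e_{n-t+1},\ldots,e_n\}$ is a $t$-element cocircuit, its complement $Y=\{e_1,\ldots,e_{n-t}\}$ is a hyperplane, so $r(M)=r(Y)+1$; and $Y$ is a contiguous arc, so your prefix scan applies to it with no wrap-around and shows that the $\bigl(\frac{n}{2}-1\bigr)$-element set $\{e_1,\ldots,e_{t-1},e_{t+2},e_{t+4},\ldots,e_{n-t}\}$ spans $Y$. This yields $r(M)\le\frac{n}{2}$ cleanly. Note also that your scan tacitly assumes $n\ge 2t$; the boundary case $n=2t-2$ permitted by Lemma~\ref{size} needs separate treatment (the paper again handles it with a cocircuit, not with the circuits alone).
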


\begin{proof}
Let $n=|E(M)|$ and let $\sigma=(e_1, e_2, \ldots, e_n)$ be a $t$-cyclic ordering of $M$. By Lemma~\ref{size}, $n\ge 2t-2$ and $n$ is even. First suppose $n=2t-2$. Then, as $\{e_{t-1}, e_t, \ldots, e_{2t-2}\}$ is a cocircuit, it follows that $\{e_1, e_2, \ldots, e_{t-1}\}$ spans $M$ as its complement contains no cocircuit. Similarly, $\{e_2, e_3, \ldots, e_t\}$ cospans $M$. Thus $r(M)\le t-1$ and $r^*(M)\le t-1$, that is $r(M)=r^*(M)=\frac{n}{2}$.

Now suppose that $n\ge 2t$. Since $\{e_{n-(t-1)}, e_{n-(t-2)}, \ldots, e_n\}$ is a $t$-element cocircuit, $Y=\{e_1, e_2, \ldots, e_{n-t}\}$ is a hyperplane of $M$. Moreover, as $\{e_i, e_{i+1}, \ldots, e_{i+t-1}\}$ is a $t$-element circuit for all odd $i\in [n]$, it is easily checked that
$$X=\{e_1, e_2, \ldots, e_{t-1}, e_{t+2}, e_{t+4}, \ldots, e_{n-t}\}$$
spans $Y$. Therefore
$$r(M)-1=r(Y)\le |X|=\frac{n}{2}-1,$$
and so $r(M)\le \frac{n}{2}$. If $t$ is even, then $\{e_{n-(t-1)}, e_{n-(t-2)}, \ldots, e_n\}$ is also a $t$-element circuit, and so an analogous argument shows that $Y$ is a cohyperplane and $X$ cospans $Y$. Thus $r^*(M)\le \frac{n}{2}$, and so $r(M)=r^*(M)=\frac{n}{2}$.

Now assume $t$ is odd. Then $\{e_{n-t}, e_{n-(t-1)}, \ldots, e_{n-1}\}$ is a circuit of $M$, and so
$$Y'=\{e_n, e_1, e_2, \ldots, e_{n-(t+1)}\}$$
is a cohyperplane of $M$. Now let
$$X'=\{e_n, e_1, e_2, \ldots, e_{t-2}, e_{t+1}, e_{t+3}, \ldots, e_{n-(t+1)}\}.$$
Since $\{e_{i+1}, e_{i+2}, \ldots, e_{i+t}\}$ is a $t$-element cocircuit for all odd $i\in [n]$, it is easily checked that $X'$ cospans $Y'$. Thus
$$r^*(M)-1=r^*(Y')\le |X'|=\frac{n}{2}-1,$$
and therefore $r^*(M)\le \frac{n}{2}$. Hence, if $t$ is odd, then $r(M)=r^*(M)=\frac{n}{2}$. This completes the proof of the lemma.
\end{proof}

\section{Flowers}
\label{flowers}

In this section we establish Theorems~\ref{oddflower} and~\ref{evenflower}. We would have liked to prove these theorems simultaneously. However, apart from the first lemma, the cases of when $t$ is odd or even are treated separately to avoid any ambiguity.

Regardless of whether $t$ is even or odd, if $\sigma=(e_1, e_2, \ldots, e_n)$ is a $t$-cyclic ordering of a matroid $M$, then, for all $j\in [n]$, the $(t-1)$-element set $\{e_{j+1}, e_{j+2}, \ldots, e_{j+(t-1)}\}$ is both coindependent and independent. This is because it is properly contained in a $t$-element cocircuit and a $t$-element circuit. The next lemma extends this observation for when $n\ge 2t$.

\begin{lemma}
\label{oddindep}
Let $M$ be a matroid, and let $\sigma=(e_1, e_2, \ldots, e_n)$ be a $t$-cyclic ordering of $E(M)$ for some positive integer $t$, and suppose that $n\ge 2t$.
\begin{enumerate}[{\rm (i)}]
\item If $t$ is odd, then, for all odd $i\in [n]$, we have $\{e_i, e_{i+1}, \ldots, e_{i+t-1}\}$ is coindependent and $\{e_{i+1}, e_{i+2}, \ldots, e_{i+t}\}$ is independent.

\item If $t$ is even, then, for all odd $i\in [n]$, we have $\{e_{i+1}, e_{i+2}, \ldots, e_{i+t}\}$ is both independent and coindependent.
\end{enumerate}
\end{lemma}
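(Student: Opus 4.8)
The plan is to leverage the observation recorded just before the lemma—that every block of $t-1$ consecutive elements is both independent and coindependent—and bootstrap it to the length-$t$ blocks in the statement, using orthogonality as essentially the only tool. The mechanism throughout is uniform: to see that a $t$-set $S$ whose initial $(t-1)$-subblock is already known to be independent (resp.\ coindependent) is itself independent (resp.\ coindependent), I would suppose it contains a circuit $C$ (resp.\ cocircuit $C^{*}$); since the subblock carries no circuit (resp.\ cocircuit), the one extra element of $S$ must lie in $C$ (resp.\ $C^{*}$); then I would exhibit a $t$-cocircuit (resp.\ $t$-circuit) meeting $S$ in exactly that extra element, so that it meets $C$ (resp.\ $C^{*}$) in a single element, contradicting orthogonality. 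The hypothesis $n\ge 2t$ enters only to guarantee that the two relevant cyclic intervals, whose combined span is $2t-1$, do not wrap around and hence overlap in precisely one element.

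Before running these arguments I would pin down the local structure of $\sigma$. For every odd $j$ the block $\{e_j,\ldots,e_{j+t-1}\}$ is a $t$-circuit, whichever of the two alternatives in the definition holds at $j$; this fact is unconditional, and it is what supplies the circuits needed above. The cocircuits, however, sit in different places depending on the alternative, so I would first determine which one occurs. When $t$ is odd I claim $\sigma$ must be odd: if some odd $i$ realised alternative (ii), then $\{e_i,\ldots,e_{i+t-1}\}$ would be a cocircuit, and, as $i+t-1$ is again odd, it would meet the circuit $\{e_{i+t-1},\ldots,e_{i+2t-2}\}$ in the single element $e_{i+t-1}$, violating orthogonality. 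When $t$ is even I claim $\sigma$ must be even: if some odd $i$ realised alternative (i), then $\{e_{i+1},\ldots,e_{i+t}\}$ would be a cocircuit meeting the circuit $\{e_{i+t},\ldots,e_{i+2t-1}\}$ (here $i+t$ is odd) in the single element $e_{i+t}$, again impossible. I expect this normalisation to be the step most easily overlooked, and in that sense the main obstacle: without it one does not know which $t$-sets are cocircuits, and the independence halves of the statement genuinely require cocircuits, not merely the free circuits $\{e_j,\ldots,e_{j+t-1}\}$.

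With the ordering type fixed, each assertion is a single orthogonality check. For $t$ odd (so $\sigma$ odd) and $i$ odd, coindependence of $\{e_i,\ldots,e_{i+t-1}\}$ follows by pairing a putative cocircuit inside it—which must contain $e_i$—with the circuit $\{e_{i-t+1},\ldots,e_i\}$, which meets the block only in $e_i$; independence of $\{e_{i+1},\ldots,e_{i+t}\}$ follows by pairing a putative circuit inside it—which must contain $e_{i+t}$—with the cocircuit $\{e_{i+t},\ldots,e_{i+2t-1}\}$, available precisely because $\sigma$ is odd. For $t$ even (so $\sigma$ even) and $i$ odd, both halves concern $\{e_{i+1},\ldots,e_{i+t}\}$ and both use the same neighbouring block $\{e_{i+t},\ldots,e_{i+2t-1}\}$, which is simultaneously a circuit and a cocircuit; its one-element overlap $\{e_{i+t}\}$ with $\{e_{i+1},\ldots,e_{i+t}\}$ contradicts orthogonality against either a circuit or a cocircuit contained in the latter. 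As a bookkeeping shortcut I would note that $M^{*}$ is again $t$-cyclic with the same ordering, so the coindependence statements are dual to the independence statements, and only one of each pair need be written out in full.
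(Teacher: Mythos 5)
Your proof is correct and uses essentially the same mechanism as the paper's: orthogonality against the flanking $t$-element circuits and cocircuits, which meet the set in question in exactly one element because $n\ge 2t$ prevents wrap-around. The only real difference is that you first make explicit that the parity of $t$ forces the parity of the ordering, a fact the paper leaves implicit (its argument rules out \emph{any} cocircuit inside $\{e_i,\ldots,e_{i+t-1}\}$, including that set itself, by excluding both endpoints rather than your single ``extra'' element).
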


\begin{proof}
To prove (i), suppose $t$ is odd and first assume, for some odd $i\in [n]$, that $X=\{e_i, e_{i+1}, \ldots, e_{i+t-1}\}$ is codependent. Then $X$ contains a cocircuit $C$. Now $\{e_{i-(t-1)}, e_{i-(t-2)}, \ldots, e_i\}$ and $\{e_{i+t-1}, e_{i+t}, \ldots, e_{i+2t-2}\}$ are $t$-element circuits and so, as $n\ge 2t$, it follows by orthogonality that $e_i\not\in C$ and $e_{i+t-1}\not\in C$. That is,
$$C\subseteq \{e_{i+1}, e_{i+2}, \ldots, e_{i+t-2}\}.$$
But then $C$ is properly contained in the $t$-element cocircuit $\{e_{i+1}, e_{i+2}, \ldots, e_{i+t-1}\}$; a contradiction. Thus, for all odd $i\in [n]$, the set $\{e_i, e_{i+1}, \ldots, e_{i+t-1}\}$ is coindependent. The proof, for all odd $i\in [n]$, that $\{e_{i+1}, e_{i+2}, \ldots, e_{i+t}\}$ is independent as well as the proof of (ii) is similar and omitted.
\end{proof}


We now work towards proving Theorem~\ref{oddflower}, which applies when $t$ is an odd integer exceeding one. As remarked in the introduction, if $M$ is a $1$-cyclic matroid and $\sigma$ is a $1$-cyclic ordering of $M$, then any concatenation $\Phi$ of $\sigma$ into non-empty sets is a $1$-anemone.

\begin{lemma}
\label{petals1}
Let $M$ be a matroid and let $\sigma=(e_1, e_2, \ldots, e_n)$ be an odd $t$-cyclic ordering of $E(M)$ for some odd integer $t$. Then, for all $i\in [n]$ and $1\le j\le \frac{n}{2}$,
$$\lambda(\{e_{i+1}, e_{i+2}, \ldots, e_{i+j}\})=
\begin{cases}
j, & \mbox{if $j < t-1$; and} \\
t-1, & \mbox{if $j\ge t-1$.}
\end{cases}$$
\end{lemma}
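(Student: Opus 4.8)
The plan is to compute $\lambda$ of an interval $A = \{e_{i+1}, \ldots, e_{i+j}\}$ by working with the formula $\lambda(A) = r(A) + r^*(A) - |A|$ and exploiting the symmetry between circuits and cocircuits in an odd $t$-cyclic ordering. By the remark preceding \cref{oddindep} and that lemma itself, short intervals of length $t-1$ are both independent and coindependent, which immediately handles the case $j < t-1$: for such an interval $r(A) = r^*(A) = |A| = j$, giving $\lambda(A) = j$. So the real content is the case $j \ge t-1$, where I must show both the rank and corank of $A$ drop by exactly the right amount to force $\lambda(A) = t-1$.

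For the case $j \ge t-1$, the key is to count how many circuits and cocircuits the interval $A$ contains, since each circuit fully inside $A$ forces $r(A) < |A|$ and each cocircuit inside $A$ forces $r^*(A) < |A|$. First I would establish, using \cref{oddindep}, that $A$ is spanned by a cleverly chosen subset: since every $t$-element circuit $\{e_\ell, \ldots, e_{\ell+t-1}\}$ (for odd $\ell$) lies inside $A$ once $\ell+t-1 \le i+j$, I can express later elements of $A$ in terms of earlier ones, exactly as in the proof of \cref{rank}. Concretely, I expect $r(A) = |A| - \lfloor (j - (t-1))/2 \rfloor \cdot(\text{something})$, but the cleaner route is to build an explicit spanning set of $A$ of size $\frac{j+t-1}{2}$ by keeping the first $t-1$ elements and then every second element thereafter (alternating which parity is redundant using the circuits). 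A dual argument using the $t$-element cocircuits $\{e_\ell, \ldots, e_{\ell+t-1}\}$ gives a cospanning set of the same size. This should yield $r(A) = r^*(A) = \frac{1}{2}(j + t - 1)$ for $j \ge t-1$ (subject to parity bookkeeping, and using $j \le \frac{n}{2}$ to avoid wraparound interference), whence
$$\lambda(A) = r(A) + r^*(A) - |A| = (j + t - 1) - j = t - 1.$$

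The main obstacle will be the parity bookkeeping in the spanning-set construction together with carefully tracking exactly which circuits and cocircuits sit inside the interval $A = \{e_{i+1}, \ldots, e_{i+j}\}$. Because the $t$-cyclic structure only guarantees $t$-element circuits starting at \emph{odd} indices (and cocircuits are shifted by one), the count of usable circuits versus cocircuits inside $A$ depends on the parity of the starting index $i+1$ and on whether $j - (t-1)$ is even or odd. I would handle this by reducing, up to relabelling, to the case where $i+1$ is odd, then treating the two subcases $j - (t-1)$ even and odd separately; the constraint $j \le \frac{n}{2}$ is what ensures that the circuits and cocircuits I invoke genuinely lie inside $A$ and do not wrap around the cyclic order to create spurious orthogonality obstructions. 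The upper bounds $r(A), r^*(A) \le \frac{1}{2}(j+t-1)$ come from exhibiting spanning/cospanning sets, and the matching lower bounds follow from \cref{oddindep}, which guarantees that the first $t-1$ elements are independent and coindependent, preventing the rank from dropping too far. Assembling these inequalities yields the claimed value of $\lambda$ in all cases.
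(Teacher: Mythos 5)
Your route is genuinely different from the paper's. The paper proves this lemma by induction on $j$, tracking how $\lambda$ changes as each element $e_{i+j}$ is appended: depending on the parity of $i+j$, the new element lies in $\cl$ of the preceding elements but not in $\cl^*$ of them (or vice versa), so $r+r^*$ and $|X|$ both grow by exactly one and $\lambda$ is unchanged. You instead propose to compute $r(A)$ and $r^*(A)$ outright via explicit spanning and cospanning sets and then subtract $|A|$. That is a viable alternative --- indeed the paper later proves essentially your rank formula as \cref{petalrank} and could have derived \cref{petals1} from it and its dual --- and it buys a sharper statement (the individual ranks, not just their sum).

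Two concrete problems remain, though. First, your claimed conclusion $r(A)=r^*(A)=\frac{1}{2}(j+t-1)$ is false when $j$ is odd: since $t$-element circuits begin only at odd indices and the cocircuits are shifted by one, for odd $j$ one of $r(A), r^*(A)$ equals $\frac{1}{2}(j+t-2)$ and the other $\frac{1}{2}(j+t)$ (which is which depends on the parity of $i+1$); compare the three cases of \cref{petalrank}. The sum is still $j+t-1$, so the lemma survives, but you must carry all four parity cases (of $i+1$ and of $j$) rather than ``reduce by relabelling'' --- a shift by one is not a symmetry of the odd $t$-cyclic structure, only duality combined with the shift is. Second, your lower bounds are under-justified: \cref{oddindep} only gives independence of the first $t$ elements of the interval, hence $r(A)\ge t$, not $r(A)\ge\frac{1}{2}(j+t-1)$. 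To show the retained elements beyond the first $t$ genuinely increase the rank, you need the argument the paper uses in \cref{petalrank}: each such element lies in $\cl^*$ of a $(t-1)$-element set disjoint from the earlier part of the interval (this is where $j\le\frac{n}{2}$ prevents wraparound), so by \cref{handy} it is not in the closure of the earlier elements. With those two repairs your plan goes through.
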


\begin{proof}
Fixing $i\in [n]$, let $X=\{e_{i+1}, e_{i+2}, \ldots, e_{i+j}\}$, where $1\le j\le \frac{n}{2}$. Note that $|X|=j$. We argue by induction that, for all $j$, we have $\lambda(X)=j$ if $j < t-1$ and $\lambda(X)=t-1$ otherwise.

If $1\le j\le t-1$, then, $X$ is both independent and coindependent, so
$$\lambda(X)=r(X)+r^*(X)-|X|=j+j-j=j.$$
Thus we may now assume that $n\ge 2t$. If $j=t$, then, by Lemma~\ref{oddindep}, $X$ is either a coindependent circuit or an independent cocircuit. In both instances,
$$\lambda(X)=|X|-1=t-1.$$
Thus the lemma holds if $1\le j\le t$.
 
Now suppose $t+1\le j\le \frac{n}{2}$ and $\lambda(X-e_{i+j})=t-1$. If $i+j$ is odd, then
$$\{e_{i+j-(t-1)}, e_{i+j-(t-2)}, \ldots, e_{i+j}\}$$
is a circuit and so $e_{i+j}\in \cl(X-e_{i+j})$. Now
$$Y=\{e_{i+j}, e_{i+j+1}, \ldots, e_{i+j+(t-1)}\}$$
is also a circuit, so $e_{i+j}\in \cl(Y-e_{i+j})$. Since $j\le \frac{n}{2}$ and $n\ge 2t$, we also have $|Y\cap (X-e_{i+j})|=0$, and so, by Lemma~\ref{handy}, $e_{i+j}\not\in \cl^*(X-e_{i+j})$. Therefore, by the induction assumption,
\begin{align*}
\lambda(X) & = r(X)+r^*(X)-|X| \\
& = r(X-e_{i+j})+r^*(X-e_{i+j})+1-(|X-e_{i+j}|+1) \\
& = \lambda(X-e_{i+j}) = t-1
\end{align*}
The argument for when $i+j$ is even is the same but with the roles of the circuits and cocircuits interchanged. The lemma now follows by induction.
\end{proof}

\begin{lemma}
\label{petalrank}
Let $t$ be an odd integer exceeding one and let $M$ be a matroid. Suppose that $\sigma=(e_1, e_2, \ldots, e_n)$ is an odd $t$-cyclic ordering of $M$. If $P=\{e_{i+1}, e_{i+2}, \ldots, e_{i+k}\}$, where $|P|\ge t-1$ and $|E(M)-P|\ge t-1$, then
$$r(P)=
\begin{cases}
\frac{1}{2}\left(|P|+t-1\right), & \mbox{if $|P|$ is even;} \\
\frac{1}{2}\left(|P|+t-2\right), & \mbox{if $i+1$ is odd and $|P|$ is odd; and} \\
\frac{1}{2}\left(|P|+t\right), & \mbox{if $i+1$ is even and $|P|$ is odd.}
\end{cases}$$
\end{lemma}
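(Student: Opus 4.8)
The plan is to compute $r(P)$ directly by exhibiting an explicit basis of $P$, obtained by scanning the elements $e_{i+1}, e_{i+2}, \ldots, e_{i+k}$ from left to right and deciding, for each $e_{i+m}$, whether it increases the rank of the elements already seen. The guiding principle is that the odd-start circuits of $\sigma$ supply the rank dependences inside $P$, while the even-start cocircuits certify, via orthogonality, that the remaining elements are independent; because $t$ is odd these two families interlock cleanly. First I would record both families: since $\sigma$ is an odd $t$-cyclic ordering, $\{e_j, e_{j+1}, \ldots, e_{j+t-1}\}$ is a $t$-element circuit whenever $j$ is odd, and it is a $t$-element cocircuit whenever $j$ is even. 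I would then let $B$ be the union of $\{e_{i+1}, \ldots, e_{i+t-1}\}$ with all $e_{i+m}$ for which $t \le m \le k$ and $i+m$ is even, and claim that $B$ is a basis of $P$.

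To see that $B$ spans $P$, it suffices to show that each discarded element $e_{i+m}$, with $t \le m \le k$ and $i+m$ odd, lies in $\cl(\{e_{i+1}, \ldots, e_{i+m-1}\})$. Since $t$ is odd, $i+m-t+1$ is then odd, so $\{e_{i+m-t+1}, \ldots, e_{i+m}\}$ is a circuit contained in $\{e_{i+1}, \ldots, e_{i+m}\}$, whence $e_{i+m} \in \cl(\{e_{i+m-t+1}, \ldots, e_{i+m-1}\})$. A straightforward induction on $m$ then gives $P \subseteq \cl(B)$, so $r(P) \le |B|$.

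To see that $B$ is independent, I would again scan left to right. The first $t-1$ elements $e_{i+1}, \ldots, e_{i+t-1}$ are independent because any $t-1$ consecutive elements of $\sigma$ form an independent set, as observed before Lemma~\ref{oddindep}. For a retained element $e_{i+m}$ with $m \ge t$ and $i+m$ even, the set $\{e_{i+m}, e_{i+m+1}, \ldots, e_{i+m+t-1}\}$ is a cocircuit; here the hypothesis $|E(M)-P| \ge t-1$ is exactly what guarantees that this cocircuit does not wrap around $\sigma$ back into $\{e_{i+1}, \ldots, e_{i+m-1}\}$, so that it meets $\{e_{i+1}, \ldots, e_{i+m}\}$ in precisely $\{e_{i+m}\}$. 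If $e_{i+m}$ were in $\cl(\{e_{i+1}, \ldots, e_{i+m-1}\})$, there would be a circuit $C \ni e_{i+m}$ inside $\{e_{i+1}, \ldots, e_{i+m}\}$, giving $|C \cap \{e_{i+m}, \ldots, e_{i+m+t-1}\}| = 1$ and contradicting orthogonality. Hence $B$ is independent, so $B$ is a basis and $r(P) = |B|$.

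Finally, I would count $|B| = (t-1) + \#\{m : t \le m \le k,\ i+m \text{ even}\}$ and evaluate it according to the parities of $k = |P|$ and of $i+1$. Since $t$ is odd, the interval of indices $\{t, t+1, \ldots, k\}$ has even length when $|P|$ is even, which is why that case is independent of the parity of $i+1$ and yields $\tfrac{1}{2}(|P|+t-1)$; the two odd-$|P|$ subcases differ by exactly one retained element and give $\tfrac{1}{2}(|P|+t-2)$ and $\tfrac{1}{2}(|P|+t)$ respectively. The main obstacle is the bookkeeping rather than any single deep step: making the left-to-right induction precise, and verifying that $|E(M)-P|\ge t-1$ is precisely the condition preventing the certifying cocircuits from wrapping around and spoiling the orthogonality argument.
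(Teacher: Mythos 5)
Your proposal is correct and follows essentially the same route as the paper's proof: a left-to-right greedy construction of an explicit basis of $P$, using the odd-indexed circuits to show the discarded elements are spanned and the even-indexed cocircuits (via orthogonality, where the paper instead invokes Lemma~\ref{handy}, an equivalent step) to show the retained elements are independent, followed by a parity count. The only cosmetic difference is that you describe the basis uniformly across all parity cases, whereas the paper writes out one case and declares the others similar.
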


\begin{proof}
We prove the lemma for when $i+1$ is even and $|P|$ is even. The proof for when $i+1$ is odd and $|P|$ is even as well as the other two instances is similar and omitted.

Suppose $i+1$ and $|P|$ are both even. If $|P|=t-1$, then $r(P)=t-1$, so we may assume $|P|\ge t+1$ and, therefore, $n\ge 2t$. Then, by Lemma~\ref{oddindep}, $\{e_{i+1}, e_{i+2}, \ldots e_{i+t}\}$ is independent. Now let $j\in \{t+1, t+2, \ldots, k\}$. If $j$ is even, then $e_{i+j}\in \cl(\{e_{i+j-(t-1)}, e_{i+j-(t-2)}, \ldots, e_{i+j-1}\})$ as $\{e_{i+j-(t-1)}, e_{i+j-(t-2)}, \ldots, e_{i+j}\}$ is a $t$-element circuit. \blue{On the other hand,} if $j$ is odd, then $e_{i+j}\in \cl^*(\{e_{i+j+1}, e_{i+j+2}, \ldots, e_{i+j+t-1}\})$ as $\{e_{i+j}, e_{i+j+1}, \ldots, e_{i+j+t-1}\}$ is a $t$-element cocircuit, and so, as $|E(M)-P|\ge t-1$, it follows by Lemma~\ref{handy} that $e_{i+j}\not\in \cl(\{e_{i+1}, e_{i+2}, \ldots, e_{i+j-1}\})$. By considering each of the elements $e_{i+t+1}, e_{i+t+2}, \ldots, e_{i+k}$ in turn, we deduce that
$$X=\{e_{i+1}, e_{i+2}, \ldots, e_{i+t}, e_{i+t+2}, e_{i+t+4}, \ldots, e_{i+k-1}\}$$
is a basis of $M|P$. As
$$|X|=t-1+{\textstyle \frac{1}{2}}(|P|-(t-1))={\textstyle \frac{1}{2}}(|P|+t-1),$$
the lemma holds when $i+1$ is even and $|P|$ is even.
\end{proof}

We now prove Theorem~\ref{oddflower} which, for convenience, we restate.

\noindent {\bf Theorem~\ref{oddflower}.} Let $t$ be a positive odd integer exceeding one, and let $M$ be a matroid. Suppose that $\sigma$ is an odd t-cyclic ordering of $M$. If $\Phi=(P_1, P_2, \ldots, P_m)$ is a concatenation of $\sigma$ with $|P_i|\ge t-1$ for all $i\in [m]$, then $\Phi$ is a $t$-daisy. Moreover, for all $i\in [m]$, we have $\sqcap(P_i, P_{i+1})=\frac{1}{2}(t-1)$ and, for all non-consecutive petals $P_i$ and $P_j$, we have $\sqcap(P_i, P_j)\le \frac{1}{2}(t-3)$.

\begin{proof}
Let $\sigma=(e_1, e_2, \ldots, e_n)$, and let $\Phi=(P_1, P_2, \ldots, P_m)$ be a concatenation of $\sigma$ with $|P_i|\ge t-1$ for all $i\in [m]$. Since $\lambda$ is symmetric, it follows by Lemma~\ref{petals1} that $\Phi$ is a $t$-flower. To establish that $\Phi$ is a $t$-daisy with the desired local connectivities, it suffices, by \cite[Lemma~4.3]{Aikin2008}, to show that $\sqcap(P_1, P_2)=\frac{1}{2}(t-1)$ if $m\ge 3$ and $\sqcap(P_1, P_3)\le \frac{1}{2}(t-3)$ if $m\ge 4$.

Let $P_1=\{e_{i+1}, e_{i+2}, \ldots, e_{i+k}\}$, and suppose $m\ge 3$. We begin by showing that $\sqcap(P_1, P_2)=\frac{1}{2}(t-1)$. First assume that $i+1$ is odd, \blue{$|P_1|$} is odd, and $|P_2|$ is odd. Then, by Lemma~\ref{petalrank},
\begin{align*}
\sqcap(P_1, P_2) & = r(P_1)+r(P_2)-r(P_1\cup P_2) \\
& = {\textstyle \frac{1}{2}}(|P_1|+t-2)+{\textstyle \frac{1}{2}}(|P_2|+t)-{\textstyle \frac{1}{2}}(|P_1\cup P_2|+t-1) \\
& = {\textstyle \frac{1}{2}}(t-1).
\end{align*}
The remaining cases, which depend on whether $i+1$ is odd or even, $|P_1|$ is odd or even, and $|P_2|$ is odd or even, are also routine and omitted. Hence $\sqcap(P_1, P_2)=\frac{1}{2}(t-1)$.

Now let $P_3=\{e_{j+1}, e_{j+2}, \ldots, e_{j+\ell}\}$, and suppose $m\ge 4$. To show that $\sqcap(P_1, P_3)\le \frac{1}{2}(t-3)$, we first establish that
\begin{align}\label{lower}
r(P_1\cup P_3)\ge
\begin{cases}
r(P_1)+\frac{1}{2}(|P_3|+1), & \mbox{if $j+1$ odd;} \\
r(P_1)+\frac{1}{2}(|P_3|+2), & \mbox{if $j+1$ even and $|P_3|$ is even;} \\
r(P_1)+\frac{1}{2}(|P_3|+3), & \mbox{if $j+1$ is even and $|P_3|$ is odd.}
\end{cases}
\end{align}
We prove the inequality for when $j+1$ is even. The result for when $j+1$ is odd is similar, but slightly more straightforward, and is omitted. If $j+1$ is even, then $e_{j+2}\in \cl^*(\{e_{j+1}, e_{j+3}, \ldots, e_{j+t}\}$ and so, as $|P_4|\ge t-1$, by Lemma~\ref{handy}, $e_{j+2}\not\in \cl(P_1)$. Thus $P_1\cup e_{j+2}$ is independent. Furthermore, since $e_{j+1}\in \cl^*(\{e_{j-(t-2)}, e_{j-(t-3)}, \ldots, e_j\})$ and $|P_2|\ge t-1$, it follows by Lemma ~\ref{handy} that $e_{j+1}\not\in \cl(P_1\cup e_{j+2})$. Therefore $P_1\cup \{e_{j+1}, e_{j+2}\}$ is independent. Repeatedly using Lemma~\ref{handy} and the fact that $P_1$ and $P_3$ are non-consecutive and $|P_4|\ge t-1$, it is easily seen that
$$P_1\cup \{e_{j+1}, e_{j+2}, e_{j+3}, e_{j+5}, \ldots, e_{j+\ell-2}, e_{j+\ell}\}$$
is independent if $|P_3|$ is odd and
$$P_1\cup \{e_{j+1}, e_{j+2}, e_{j+3}, e_{j+5}, \ldots, e_{j+\ell-3}, e_{j+\ell-1}\}$$
is independent if $|P_3|$ is even.
Since
$$|\{e_{j+1}, e_{j+2}, e_{j+3}, e_{j+5}, \ldots, e_{j+\ell-2}, e_{j+\ell}\}|={\textstyle \frac{1}{2}}(|P_3|+3)$$
and
$$|\{e_{j+1}, e_{j+2}, e_{j+3}, e_{j+5}, \ldots, e_{j+\ell-3}, e_{j+\ell-1}\}|={\textstyle \frac{1}{2}}(|P_3|+2),$$
we have $r(P_1\cup P_3)\ge r(P_1)+\frac{1}{2}(|P_3|+3)$ if $|P_3|$ is odd and $r(P_1\cup P_3)\ge r(P_1)+\frac{1}{2}(|P_3|+2)$ if $|P_3|$ is even. It follows that~(\ref{lower}) holds.

Next consider $\sqcap(P_1, P_3)$. If $j+1$ is odd and $|P_3|$ is odd, then, by Lemma~\ref{petalrank} and~(\ref{lower}),
\begin{align*}
\sqcap(P_1, P_3) & = r(P_1)+r(P_3)-r(P_1\cup P_3) \\
& \le r(P_1)+{\textstyle \frac{1}{2}}(|P_3|+t-2)-\left(r(P_1)+{\textstyle \frac{1}{2}}(|P_3|+1)\right) \\
& = {\textstyle \frac{1}{2}}(t-3).
\end{align*}
The remaining three cases are similarly checked. This completes the proof of the theorem.
\end{proof}

The proof of Theorem~\ref{evenflower} takes the same approach as the proof of Theorem~\ref{oddflower}.

\begin{lemma}
\label{petals2}
Let $M$ be a matroid and let $\sigma=(e_1, e_2, \ldots, e_n)$ be an even $t$-cyclic ordering of $E(M)$ for some even integer $t$. Then, for all $i\in [n]$ and $1\le j\le \frac{n}{2}$,
$$\lambda(\{e_{i+1}, e_{i+2}, \ldots, e_{i+j}\})=
\begin{cases}
j, & \mbox{if $j\le t-1$;} \\
t-2, & \mbox{if $j > t-1$, $j$ is even, $i$ is even;} \\
t-1, & \mbox{if $j > t-1$, $j$ is odd;} \\
t, & \mbox{if $j > t-1$, $j$ is even, $i$ is odd.}
\end{cases}$$
\end{lemma}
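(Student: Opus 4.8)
The plan is to follow the template of Lemma~\ref{petals1}: argue by induction on $j$ and track how $\lambda$ changes when the single element $e_{i+j}$ is appended to $X=\{e_{i+1}, e_{i+2}, \ldots, e_{i+j}\}$. Because $\sigma$ is an even $t$-cyclic ordering, the sets $\{e_k, e_{k+1}, \ldots, e_{k+t-1}\}$ with $k$ odd are simultaneously $t$-element circuits and $t$-element cocircuits, and these are the only circuits and cocircuits I will need. The identity driving the induction is
$$\lambda(X)-\lambda(X-e_{i+j})=\big(r(X)-r(X-e_{i+j})\big)+\big(r^*(X)-r^*(X-e_{i+j})\big)-1,$$
which follows from $\lambda(Z)=r(Z)+r^*(Z)-|Z|$; here the first bracket equals $1$ precisely when $e_{i+j}\notin \cl(X-e_{i+j})$ and $0$ otherwise, and likewise the second with $\cl^*$. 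I will show that this difference equals $-1$ when $i+j$ is even and $+1$ when $i+j$ is odd.

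For the base case $1\le j\le t-1$, the set $X$ is contained in $\{e_{i+1}, \ldots, e_{i+t-1}\}$, which is both independent and coindependent (the observation recorded just before Lemma~\ref{oddindep}); hence $r(X)=r^*(X)=j$ and $\lambda(X)=j$, giving the first case and, at $j=t-1$, the value $t-1$ that seeds the induction. Note that the remaining three cases require $t\le j\le \frac{n}{2}$, so $n\ge 2t$ may be assumed whenever $j\ge t$.

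For the inductive step I fix $j\ge t$ and split on the parity of $i+j$. If $i+j$ is even, then $i+j-(t-1)$ is odd (as $t$ is even), so $\{e_{i+j-(t-1)}, \ldots, e_{i+j}\}$ is a circuit and a cocircuit contained in $X$; thus $e_{i+j}$ lies in both $\cl(X-e_{i+j})$ and $\cl^*(X-e_{i+j})$, both brackets vanish, and the difference is $-1$. If $i+j$ is odd, then $\{e_{i+j}, e_{i+j+1}, \ldots, e_{i+j+t-1}\}$ is a circuit and a cocircuit whose remaining $t-1$ elements lie in $E(M)-X$ --- here the hypothesis $j\le \frac{n}{2}$ together with $n\ge 2t$ guarantees no wraparound into $X$. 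The circuit gives $e_{i+j}\in \cl(E(M)-X)$, whence Lemma~\ref{handy} forces $e_{i+j}\notin \cl^*(X-e_{i+j})$; symmetrically the cocircuit gives $e_{i+j}\in \cl^*(E(M)-X)$, and the reverse direction of Lemma~\ref{handy} forces $e_{i+j}\notin \cl(X-e_{i+j})$. Both brackets are then $1$ and the difference is $+1$.

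To finish, I would assemble the values: since $t$ is even, the parity of $i+j$ alternates with $j$ and is governed by the parity of $i$, so starting from $\lambda=t-1$ at $j=t-1$ and applying the $\pm 1$ increments yields $\lambda=t-2$ when $i$ and $j$ are both even, $\lambda=t$ when $i$ is odd and $j$ is even, and $\lambda=t-1$ whenever $j$ is odd, matching the stated cases. The step needing the most care is the $i+j$ odd case: one must orient the two applications of Lemma~\ref{handy} correctly (it is an ``if and only if'', used once in each direction) and confirm that $\{e_{i+j+1}, \ldots, e_{i+j+t-1}\}$ is genuinely disjoint from $X$, which is exactly where $j\le \frac{n}{2}$ enters. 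The parity bookkeeping, while routine, must be carried out consistently to land on the correct case split.
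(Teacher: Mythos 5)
Your proposal is correct and follows essentially the same route as the paper's proof: induction on $j$ with the same base case, the same circuit--cocircuits $\{e_{i+j-(t-1)},\ldots,e_{i+j}\}$ and $\{e_{i+j},\ldots,e_{i+j+t-1}\}$ depending on the parity of $i+j$, and the same two applications of Lemma~\ref{handy} in the $i+j$ odd case. The only difference is organizational --- you split into two cases by the parity of $i+j$ and track the $\pm 1$ increment, whereas the paper runs the four parity combinations of $(i,j)$ separately and evaluates $\lambda(X-e_{i+j})$ inductively in each --- which is a mildly cleaner packaging of the identical argument.
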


\begin{proof}
Fixing $i\in [n]$, let $X=\{e_{i+1}, e_{i+2}, \ldots, e_{i+j}\}$, where $1\le j\le \frac{n}{2}$. Note that $|X|=j$. We establish the proof by showing that $\lambda(X)$ has the desired value for all $1\le j\le \frac{n}{2}$ using induction on $j$. If $1\le j\le t-1$, then $X$ is both independent and coindependent, so
$$\lambda(X)=r(X)+r^*(X)-|X|=j+j-j=j.$$
Hence the lemma holds if $1\le j\le t-1$.

Now suppose that $t\le j\le \frac{n}{2}$, in which case $n\ge 2t$, and $\lambda(\{e_{i+1}, e_{i+2}, \ldots, e_{i+j-1}\})$ has the desired value. First, assume both $i$ and $j$ are even. Then
$$\{e_{i+j-(t-1)}, e_{i+j-(t-2)}, \ldots, e_{i+j}\}$$
is both a circuit and a cocircuit, and so $e_{i+j}\in \cl(X-e_{i+j})$ and $e_{i+j}\in \cl^*(X-e_{i+j})$. By the induction assumption, $\lambda(X-e_{i+j})=t-1$ as $j-1$ is odd. Note that $\lambda(X-e_{i+j})=t-1$ if $j=t$. So
\begin{align*}
\lambda(X) & = r(X)+r^*(X)-|X| \\
& = r(X-e_{i+j})+r^*(X-e_{i+j})-(|X|-1)-1 \\
& = \lambda(X-e_{i+j})-1 = t-2.
\end{align*}

Second, assume $j$ is odd. If $i$ is odd, then $\{e_{i+j-(t-1)}, e_{i+j-(t-2)}, \ldots, e_{i+j}\}$ is both a circuit and a cocircuit. Therefore, $e_{i+j}\in \cl(X-e_{i+j})$ and $e_{i+j}\in \cl^*(X-e_{i+j})$. By the induction assumption, $\lambda(X-e_{i+j})=t$ as $j-1$ is even and $i$ is odd, and $j\neq t$. So
\begin{align*}
\lambda(X) & = r(X)+r^*(X)-|X| \\
& = r(X-e_{i+j})+r^*(X-e_{i+j})-(|X|-1)-1 \\
& = \lambda(X-e_{i+j})-1 = t-1.
\end{align*}
If $i$ is even, then $\{e_{i+j}, e_{i+j+1}, \ldots, e_{i+j+t-1}\}$ is both a circuit and a cocircuit. Therefore $e_{i+j}\in \cl(\{e_{i+j+1}, e_{i+j+2}, \ldots, e_{i+j+t-1}\})$ and $e_{i+j}\in \cl^*(\{e_{i+j+1}, e_{i+j+2}, \ldots, e_{i+j+t-1}\})$. Since $j\le \frac{n}{2}$ and $n\ge 2t$, the set $\{e_{i+j+1}, e_{i+j+2}, \ldots, e_{i+j+t-1}\}$ has an empty intersection with $X-e_{i+j}$, and so, by Lemma~\ref{handy}, $e_{i+j}\not\in \cl^*(X-e_{i+j})$ and $e_{i+j}\not\in \cl(X-e_{i+j})$. By the induction assumption, $\lambda(X-e_{i+j})=t-2$, as $i$ is even, $j-1$ is even, and $j\neq t$. Therefore
\begin{align*}
\lambda(X) & = r(X)+r^*(X)-|X| \\
& = r(X-e_{i+j})+1+r^*(X-e_{i+j})+1-(|X|-1)-1 \\
& = \lambda(X-e_{i+j})+1 = t-1.
\end{align*}

Lastly, assume $j$ is even and $i$ is odd. Then $Y=\{e_{i+j}, e_{i+j+1}, \ldots, e_{i+j+t-1}\}$ is a circuit and a cocircuit, and so $e_{i+j}\in \cl(Y-e_{i+j})$ and $e_{i+j}\in \cl^*(Y-e_{i+j})$. Since $j\le \frac{n}{2}$ and $n\ge 2t$, the set $Y-e_{i+j}$ has an empty intersection with $X-e_{i+j}$. Therefore, by Lemma~\ref{handy}, $e_{i+j}\not\in \cl^*(X-e_{i+j})$ and $e_{i+j}\not\in \cl(X-e_{i+j})$. By the induction assumption, $\lambda(X-e_{i+j})=t-1$ as $j-1$ is odd. Again note that $\lambda(X-e_{i+j})=t-1$ if $j=t$. Thus
\begin{align*}
\lambda(X) & = r(X)+r^*(X)-|X| \\
& = r(X-e_{i+j})+1+r^*(X-e_{i+j})+1-(|X|-1)-1 \\
& = \lambda(X-e_{i+j})+1 = t.
\end{align*}
The lemma now follows.
\end{proof}

\begin{lemma}
\label{petalrank2}
Let $t$ be an even positive integer, let $M$ be a matroid, and suppose that $\sigma=(e_1, e_2, \ldots, e_n)$ is an even $t$-cyclic ordering of $M$. If $P=\{e_{i+1}, e_{i+2}, \ldots, e_{i+k}\}$, where $i+1$ is odd, $|P|$ is even, $|P|\ge t-2$, and $|E(M)-P|\ge t-2$, then
$$r(P)={\textstyle \frac{1}{2}}(|P|+t-2).$$
\end{lemma}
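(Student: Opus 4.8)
The plan is to follow the proof of Lemma~\ref{petalrank} and exhibit an explicit basis of $M|P$ by sweeping through $e_{i+1}, e_{i+2}, \ldots, e_{i+k}$ and recording which elements are forced into the span of their predecessors. The governing structural fact is that, because $\sigma$ is an \emph{even} $t$-cyclic ordering, the sets $\{e_\ell, e_{\ell+1}, \ldots, e_{\ell+t-1}\}$ that are simultaneously $t$-element circuits and $t$-element cocircuits are exactly those with $\ell$ odd. Since $i+1$ is odd, this fixes the parity pattern of circuits and cocircuits across $P$.

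First I would dispose of the base case $|P|=t-2$: here $P$ is a proper subset of the circuit $\{e_{i+1}, \ldots, e_{i+t}\}$, hence independent, giving $r(P)=t-2=\frac12(|P|+t-2)$. As $|P|$ is even we may then assume $|P|\ge t$, so that $n=|P|+|E(M)-P|\ge t+(t-2)=2t-2$. This inequality, together with the hypothesis $|E(M)-P|\ge t-2$, is exactly what prevents the witnessing circuits and cocircuits used below from wrapping around and meeting elements already processed.

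Next I would claim that
$$X=\{e_{i+1}, e_{i+2}, \ldots, e_{i+t-1}\}\cup\{e_{i+t+1}, e_{i+t+3}, \ldots, e_{i+k-1}\}$$
is a basis of $M|P$. The first $t-1$ elements form an independent set, being a proper subset of the circuit $\{e_{i+1}, \ldots, e_{i+t}\}$. For the spanning direction, every even-indexed element $e_{i+j}$ with $t\le j\le k$ satisfies $e_{i+j}\in\cl(\{e_{i+j-t+1}, \ldots, e_{i+j-1}\})$, because $\{e_{i+j-t+1}, \ldots, e_{i+j}\}$, which starts at the odd index $i+j-t+1$, is a circuit lying entirely among the already-processed elements; an induction then gives $P\subseteq\cl(X)$. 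For independence, each odd-indexed $e_{i+j}$ with $t+1\le j\le k-1$ satisfies $e_{i+j}\in\cl^*(\{e_{i+j+1}, \ldots, e_{i+j+t-1}\})$, since $\{e_{i+j}, \ldots, e_{i+j+t-1}\}$, with its odd start, is a cocircuit; because $|E(M)-P|\ge t-2$ forces this witnessing set to be disjoint from $\{e_{i+1}, \ldots, e_{i+j-1}\}$, Lemma~\ref{handy} gives $e_{i+j}\notin\cl(\{e_{i+1}, \ldots, e_{i+j-1}\})$, so $e_{i+j}$ is independent of the elements chosen before it.

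Finally, counting the chosen elements yields $|X|=(t-1)+\frac12(k-t)=\frac12(|P|+t-2)$, as required. I expect the only real obstacle to be the parity and non-wraparound bookkeeping: one must confirm that the alternating skip/add pattern (skip $e_{i+t}$, add $e_{i+t+1}$, and so on) terminates cleanly with the addition of $e_{i+k-1}$, and that the cocircuit witnessing the independence of $e_{i+k-1}$, which reaches as far as $e_{i+k+t-2}$, stays disjoint from the processed elements --- this is precisely the point at which $|E(M)-P|\ge t-2$ is used.
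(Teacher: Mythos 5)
Your proof is correct and follows essentially the same strategy as the paper's: sweep through $P$, exhibiting an explicit basis whose even-position elements beyond the initial segment are absorbed by the circuits $\{e_{i+j-t+1},\dots,e_{i+j}\}$ and whose odd-position elements are kept independent via the cocircuits $\{e_{i+j},\dots,e_{i+j+t-1}\}$ together with Lemma~\ref{handy}. The only (harmless) difference is your choice of initial independent segment $\{e_{i+1},\dots,e_{i+t-1}\}$ in place of the paper's $\{e_{i+2},\dots,e_{i+t+1}\}$, which lets you avoid invoking Lemma~\ref{oddindep}; your accounting of where $|E(M)-P|\ge t-2$ is needed is also accurate.
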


\begin{proof}
If $|P|=t-2$ or $|P|=t$, then $r(P)=t-2$ or $r(P)=t-1$, respectively. Thus we may assume that $|P|\ge t+2$, and so $n\ge 2t$. Then, by Lemma~\ref{oddindep}, \blue{$\{e_{i+2}, e_{i+3}, \ldots, e_{i+t+1}\}$} is independent. Observe that $e_{i+1}\in \cl(\{e_{i+2}, e_{i+3}, \ldots, e_{i+t}\})$. Now let $j\in \{t+2, t+3, \ldots, k\}$. If $j$ is even, then $e_{i+j}\in \cl(\{e_{i+j-(t-1)}, e_{i+j-(t-2)}, \ldots, e_{i+j-1}\})$. On the other hand, if $j$ is odd, then
$$e_{i+j}\in \cl^*(\{e_{i+j+1}, e_{i+j+2}, \ldots, e_{i+j+t-1}\})$$
as $\{e_{i+j}, e_{i+j+1}, \ldots, e_{i+j+t-1}\}$ is a $t$-element cocircuit. Since $j$ is odd and $|P|$ is even, $e_{i+j+1}\in P$ and so, as $|E(M)-P|\ge t-2$, it follows by Lemma~\ref{handy} that $e_{i+j}\not\in\cl(\{e_{i+1}, e_{i+2}, \ldots, e_{i+j-1}\})$. Considering each of the elements $e_{i+t+2}, e_{i+t+3}, \ldots, e_{i+k}$ in turn, we deduce that
$$X=\{e_{i+2}, e_{i+3}, \ldots, e_{i+t+1}, e_{i+t+3}, e_{i+t+5}, \ldots, e_{i+k-1}\}$$
is a basis of $M|P$. Since $|X|=\frac{1}{2}(|P|+t-2)$, the lemma holds.
\end{proof}

For convenience, we restate Theorem~\ref{evenflower}.

\noindent {\bf Theorem~\ref{evenflower}.} Let $t$ be a positive even integer and let $M$ be a matroid. Let $\sigma=(e_1, e_2, \ldots, e_n)$ be an even $t$-cyclic ordering of $E(M)$, and suppose that $\Phi=(P_1, P_2, \ldots, P_m)$ is a concatenation of $\sigma$ such that, for all $i\in [m]$, if
$$P_i=\{e_{j+1}, e_{j+2}, \ldots, e_{j+k}\},$$
then $|P_i|\ge t-2$, $|P_i|$ is even, and $j+1$ is odd. Then $\Phi$ is a $(t-1)$-flower. Moreover, for all $i\in [n]$, we have $\sqcap(P_i, P_{i+1})=\frac{1}{2}(t-2)$.

\begin{proof}
Suppose that $\Phi=(P_1, P_2, \ldots, P_m)$ is a concatenation of $\sigma$ as described in the statement of the theorem. Since $\lambda$ is symmetric, it follows by Lemma~\ref{petals2} that $\Phi$ is a $(t-1)$-flower. To see $\sqcap(P_1, P_2)=\frac{1}{2}(t-2)$ if $m\ge 3$, observe that, by Lemma~\ref{petalrank2},
\begin{align*}
\sqcap(P_1, P_2) & = r(P_1)+r(P_2)-r(P_1\cup P_2) \\
& = {\textstyle \frac{1}{2}}(|P_1|+t-2)+{\textstyle \frac{1}{2}}(|P_2|+t-2)-{\textstyle \frac{1}{2}}(|P_1|+|P_2|+t-2) \\
& = {\textstyle \frac{1}{2}}(t-2).
\end{align*}
This completes the proof of the theorem.
\end{proof}

\section{Construction}
\label{build}

In this section we describe a construction which, for all positive integers $t$ exceeding one, takes a $t$-cyclic matroid and produces a $(t+2)$-cyclic matroid having the same ground set. Let $M$ be a $t$-cyclic matroid with $n=|E(M)|$, where $t\ge 2$ and $n\ge 2(t+2)-2$, and let $\sigma=(e_1, e_2, \ldots, e_n)$ be a $t$-cyclic ordering of $M$. We require that $n\ge 2(t+2)-2$, as a $(t+2)$-cyclic matroid has at least $2(t+2)-2$ elements, by \cref{size}. Let $M'$ be the truncation of $M$. That is, $M'$ is obtained by freely adding an element, $f$ say, to $M$ to get $M_1$ and then contracting $f$ from $M_1$ to get $M'$. For all $j\in [n]$, if $\{e_{j+1}, e_{j+2}, \ldots, e_{j+t}\}$ and $\{e_{j+3}, e_{j+4}, \ldots, e_{j+t+2}\}$ are $t$-element cocircuits of $M$, then $\{e_{j+1}, e_{j+2}, \ldots, e_{j+t+2}\}$ is a $(t+2)$-element cocircuit of $M'$. To see this, it is easily checked that
$$(E(M)-\{e_{j+1}, e_{j+2}, \ldots, e_{j+t+2}\})\cup \{f\}$$
is a hyperplane of $M_1$, so $E(M)-\{e_{j+1}, e_{j+2}, \ldots, e_{j+t+2}\}$ is a hyperplane of $M'$. In other words, $\{e_{j+1}. e_{j+2}, \ldots, e_{j+t+2}\}$ is a cocircuit of $M'$. Next, we let $N$ be the Higgs lift of $M'$. That is, let $M'_1$ be the matroid obtained by freely coextending $M'$ by an element, $g$ say. Observe that $(M'_1)^*$ is the free extension of $(M')^*$. Let $N$ be the matroid obtained from $M'_1$ by deleting $g$. Then, dually, for all $j\in [n]$, if $\{e_{j+1}, e_{j+2}, \ldots, e_{j+t}\}$ and $\{e_{j+3}, e_{j+4}, \ldots, e_{j+t+2}\}$ are $t$-element circuits of $M$, and therefore of $M'$, then $\{e_{j+1}, e_{j+2}, \ldots, e_{j+t+2}\}$ is a $(t+2)$-element circuit of $N$. Hence, $N$ is a $(t+2)$-cyclic matroid. Observe that $\sigma$ is a $(t+2)$-cyclic ordering of $N$.


Let $t$ be an even positive integer exceeding two. We next use this construction to show that, for all $t\ge 4$, there exist $t$-cyclic matroids giving rise to $(t-1)$-anemones and $t$-cyclic matroids giving rise to $(t-1)$-daisies. Let $M$ be a $t$-cyclic matroid, and suppose that $\sigma=(e_1, e_2, \ldots, e_n)$ is an even $t$-cyclic ordering of $E(M)$. We call a concatenation $\Phi=(P_1, P_2, \ldots, P_m)$ of $\sigma$ {\em even} if, for all $i\in [m]$, the set $P_i=\{e_{j+1}, e_{j+2}, \ldots, e_{j+k}\}$ satisfies $|P_i|\ge t-2$, $|P_i|$ is even, and $j+1$ is odd.

Now let $M$ be a rank-$r$ spike, where $r\ge 3$, and let $(L_1, L_2, \ldots, L_r)$ be a partition of the ground set of $M$ into pairs such that, for all distinct $i, j\in \{1, 2, \ldots, r\}$, the union $L_i\cup L_j$ is a $4$-element circuit and $4$-element cocircuit. Then the cyclic ordering $\sigma$ of $E(M)$ in which, for all $i$, the two elements in $L_i$ are consecutive in $\sigma$ is a $4$-cyclic ordering of $M$. Thus $M$ is $4$-cyclic. Furthermore, by Theorem~\ref{evenflower}, any even concatenation $\Phi=(P_1, P_2, \ldots, P_m)$ of $\sigma$ is a $3$-flower and, as $\sqcap(P_1, P_3)=1$, it follows that $\Phi$ is a $3$-anemone.

For a $4$-cyclic matroid giving rise to a $3$-daisy, let $M$ a rank-$r$ swirl, where $r\ge 3$, and let $(L_1, L_2, \ldots, L_r)$ be a partition of the ground set of $M$ into pairs such that $L_i\cup L_{i+1}$ is a $4$-element circuit and a $4$-element cocircuit for all $i$. By choosing $\sigma$ to be a cyclic ordering of $E(M)$ such that $(L_1, L_2, \ldots, L_r)$ is a concatenation of $\sigma$, it follows that $\sigma$ is a $4$-cyclic ordering of $E(M)$, and so $M$ is $4$-cyclic. By Theorem~\ref{evenflower}, any even concatenation $\Phi=(P_1, P_2, \ldots, P_m)$ of such a $\sigma$ is a $3$-flower. To see that $\Phi$ is a $3$-daisy if $m\ge 4$, observe that $\sqcap(P_1, P_3)=0$.

Now suppose that $M$ is a $t$-cyclic matroid with at least $2(t+2)-2$ elements and let $\sigma$ be a $t$-cyclic ordering of $E(M)$. Let $N$ be the matroid obtained from $M$ by the construction detailed above. Then $N$ is a $(t+2)$-cyclic matroid and $\sigma$ is a $(t+2)$-cyclic ordering of $N$. Let $\Phi=(P_1, P_2, \ldots, P_m)$ be an even concatenation of $\sigma$, where $|P_i|\ge t$ for all $i\in [m]$. By Theorem~\ref{evenflower}, $\Phi$ is a $(t-1)$-flower of $M$ and $(t+1)$-flower of $N$. Assume $m\ge 4$, and let $P_i$ and $P_j$ be petals of $\Phi$. Since $|P_i|, |P_j|\ge t$ and so $r_M(P_i\cup P_j)\neq r(M)$, it follows by construction that $r_N(P_i)=r_M(P_i)+1$ and $r_N(P_i\cup P_j)=r_M(P_i\cup P_j)+1$. Hence if $\Phi$ is a $(t-1)$-anemone or a $(t-1)$-daisy of $M$, then $\Phi$ is a $(t+1)$-anemone or a $(t+1)$-daisy of $N$, respectively. The obvious induction gives the desired outcome.

The described construction is a specific example of an operation by which we can obtain a $(t+2)$-cyclic matroid from a $t$-cyclic matroid.
More generally, we can replace the truncation with any elementary quotient such that none of the $t$-element cocircuits corresponding to consecutive elements in the cyclic ordering are preserved; and we can replace the Higgs lift with any elementary lift such that none of the $t$-element circuits corresponding to consecutive elements in the cyclic ordering are preserved. For a $t$-cyclic matroid $M$ with $|E(M)| \ge 2t +2$, we say that $N$ is an \emph{inflation} of $M$ if we can obtain $N$, starting from $M$, by such an elementary quotient, followed by such an elementary lift.  We conjecture the following:

\begin{conjecture}
Let $t$ be an integer exceeding two, and let $M$ be a $t$-cyclic matroid. 
\begin{enumerate}[{\rm(i)}]
  \item If $t$ is even, then $M$ can be obtained from a spike or a swirl by a sequence of inflations.
  \item If $t$ is odd, then $M$ can be obtained from a wheel or whirl by a sequence of inflations.
\end{enumerate}
\end{conjecture}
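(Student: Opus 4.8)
The plan is to prove the conjecture by downward induction on $t$, keeping the cyclic ordering $\sigma$ fixed throughout, since the inflation construction in this section preserves $\sigma$. The two base cases are $t=3$ and $t=4$. I would first establish that every $3$-cyclic matroid is a wheel or a whirl, and that every $4$-cyclic matroid is a spike or a swirl. For $t=3$, the strategy is to show directly from the cyclic $(2,3)$-structure that $M$ is $3$-connected with every element in a triangle and a triad, and then invoke Tutte's Wheels-and-Whirls Theorem; the rank computation in \cref{rank}, together with orthogonality and the uniqueness from \cref{unique}, gives enough rigidity to pin down the triangles and triads. The case $t=4$ is the more delicate base case: one must show that the only $4$-cyclic matroids are spikes and swirls, using \cref{main1} to control how the $4$-element circuits and cocircuits sit in $\sigma$ and then matching the two possible global patterns (anemone versus daisy, as recorded after \cref{evenflower}) to the spike and swirl templates.

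For the inductive step, suppose $t\ge 5$ and $M$ is $t$-cyclic with ordering $\sigma$, and set $n=|E(M)|$. I want to exhibit $M$ as an inflation of some $(t-2)$-cyclic matroid $M_0$ on the same ground set with the same ordering. Observe that $M$ being an inflation of $M_0$ is equivalent to the existence of a matroid $P$ of rank $\frac{1}{2}n-1$ that is simultaneously an elementary quotient of $M_0$ (merging its consecutive $(t-2)$-cocircuits into the $t$-cocircuits, and retaining its $(t-2)$-circuits) and an elementary quotient of $M$ (whose elementary lift $M$ merges the $(t-2)$-circuits of $P$ into the $t$-circuits, while the $t$-cocircuits are retained). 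So the heart of the step is to construct this common elementary quotient $P$ directly from $M$: I would take the elementary quotient of $M$ determined by the modular cut generated by the closures of the $t$-circuits of $M$ (together with any larger flats forced into a modular cut), arranged so that $r_P$ of each $t$-circuit $\{e_i,\dots,e_{i+t-1}\}$ drops by exactly one, thereby splitting it into the two overlapping $(t-2)$-circuits $\{e_i,\dots,e_{i+t-3}\}$ and $\{e_{i+2},\dots,e_{i+t-1}\}$. One then checks, using orthogonality and the tight containment of \cref{tight}, that the $t$-cocircuits of $M$ survive in $P$ unchanged.

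Having produced $P$, the matroid $M_0$ is obtained by the dual procedure, namely an elementary lift of $P$ (determined by the corresponding modular cut in $P^{*}$) that splits each $t$-cocircuit of $P$ into two overlapping $(t-2)$-cocircuits while retaining the $(t-2)$-circuits already present in $P$. The remaining verification is that $M_0$ is genuinely $(t-2)$-cyclic with ordering $\sigma$, that is, that no spurious small circuits or cocircuits are created and that the $(t-2)$-circuits and $(t-2)$-cocircuits alternate in the pattern required by the definition of a $(t-2)$-cyclic ordering. Here I would use the flower structure: by \cref{oddflower} and \cref{evenflower}, $M$ determines a $(t-1)$-flower whose local connectivities between consecutive and non-consecutive petals are prescribed, and the construction in this section shows that inflation raises the flower order by exactly two. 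So I would verify that $M_0$ realises the corresponding $(t-3)$-flower with the correct local connectivities, which simultaneously confirms that $M_0$ is $(t-2)$-cyclic and that the inflation takes $M_0$ back to $M$.

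I expect the main obstacle to lie in two places. First, the base case $t=4$: proving that a $4$-cyclic matroid must be a spike or a swirl, with no sporadic exceptions, is essentially a standalone classification and is where I anticipate the most work, as well as the most genuine risk that the conjecture needs a broader base class. Second, in the inductive step, showing that the required modular cut actually exists and yields an elementary quotient $P$ whose circuit-splitting is \emph{exactly} the consecutive $(t-2)$-circuits, neither too few nor too many, without disturbing the $t$-cocircuits. Equivalently, one must rule out the possibility that every valid deflation of $M$ leaves the class of $(t-2)$-cyclic matroids, for instance landing on a matroid with a different ordering or with the wrong flower type. Controlling this rigidity, presumably via the uniqueness of the small circuits and cocircuits in \cref{unique} and the tight containment in \cref{tight}, is the crux on which the whole induction rests.
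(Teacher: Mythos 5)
This statement is a \emph{conjecture}: the paper offers no proof of it, so there is no argument of the authors' to compare yours against. The only question is whether your proposal stands on its own as a proof, and it does not --- it is a research plan whose hardest steps are stated as goals rather than established.

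Concretely, there are two gaps, each of which is essentially the open content of the conjecture itself. First, the base cases. The claims that every $3$-cyclic matroid is a wheel or whirl and that every $4$-cyclic matroid is a spike or swirl are exactly the $t=3$ and $t=4$ instances of the conjecture (for $t=4$ the ``sequence of inflations'' is empty), and the paper proves neither; it only records the easy converse inclusions. Your route for $t=3$ via Tutte's Wheels-and-Whirls Theorem requires first showing that a $3$-cyclic matroid is $3$-connected, and nothing in the paper delivers this: \cref{petals1} controls $\lambda$ only for sets of \emph{consecutive} elements in $\sigma$, whereas $3$-connectivity requires bounding $\lambda$ below on arbitrary subsets. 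For $t=4$ you acknowledge the classification is ``essentially a standalone'' piece of work and give no argument. Second, the inductive step. You need to produce, for every $t$-cyclic $M$ with $t\ge 5$, a common elementary quotient $P$ of $M$ and of some $(t-2)$-cyclic $M_0$, and you propose taking the quotient by the modular cut generated by the closures of the $t$-element circuits. But you do not show that this modular cut is proper, that the resulting quotient is elementary (rank drop exactly one), that each $t$-circuit splits into precisely the two overlapping consecutive $(t-2)$-circuits with no spurious small circuits created elsewhere, or that the $t$-cocircuits survive; nor do you show the dual lift reassembles to a genuinely $(t-2)$-cyclic matroid. These are precisely the points at which a counterexample to the conjecture would live, so asserting them is assuming what is to be proved. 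The proposal is a reasonable strategy, but as written it proves nothing beyond what the paper already contains.
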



\section*{Acknowledgements}

\blue{We thank James Oxley for some helpful discussions during the initial stages of this work.}

\bibliographystyle{abbrv}
\bibliography{library}

\end{document}